\DeclareMathOperator{\Vc}{\mathcal{V}}
\DeclareMathOperator{\Wc}{\mathcal{W}}
\DeclareMathOperator{\Rc}{\mathcal{R}}
\DeclareMathOperator{\Tc}{\mathcal{T}}
\DeclareMathOperator{\R}{\mathbb{R}}
\newcommand{\bu}{\mathbf{u}}
\newcommand{\bx}{\mathbf{x}}
\newcommand{\bw}{\mathbf{w}}
\newcommand{\bv}{\mathbf{v}}
\newcommand{\by}{\mathbf{y}}
\newcommand{\bQ}{\mathbf{Q}}
\newcommand{\bR}{\mathbf{R}}
\newcommand{\bC}{\mathbf{C}}
\newcommand{\bH}{\mathbf{H}}
\newcommand{\bG}{\mathbf{G}}
\newcommand{\bD}{\mathbf{D}}
\newcommand{\bq}{\mathbf{q}}
\newcommand{\bU}{\mathbf{U}}
\newcommand{\bW}{\mathbf{W}}
\newcommand{\bV}{\mathbf{V}}
\newcommand{\bF}{\mathbf{F}}
\newcommand{\bM}{\mathbf{M}}
\newcommand{\boldeta}{\bm{\eta}}
\newcommand{\bSigma}{\bm{\Sigma}}
\definecolor{purple}{rgb}{0.62, 0.0, 0.77}
\newcommand{\luci}{\color{black}} 
\definecolor{dartmouthgreen}{rgb}{0.05, 0.5, 0.06}
\newcommand{\marti}{\color{black}} 
\definecolor{cobalt}{rgb}{0.02, 0.29, 0.65}
\newcommand{\agu}{\color{black}} 
\DeclareMathOperator*{\argmin}{argmin}   
\newtheorem{theorem}{Theorem}
\newtheorem{lemma}[theorem]{Lemma}
\newtheorem{proposition}[theorem]{Proposition}
\theoremstyle{definition}
\newtheorem{example}{Example}
\newtheorem{remark}{Remark}
\let\argmin\relax
\let\argmax\relax
\DeclareMathOperator*{\argmax}{arg\,max}
\DeclareMathOperator*{\argmin}{arg\,min}
\title{\LARGE \bf 
Distributionally Robust LQG with Kullback-Leibler Ambiguity Sets}
\author{
Marta Fochesato, Lucia Falconi, Mattia Zorzi, Augusto Ferrante, John Lygeros%
\thanks{
Marta Fochesato, Lucia Falconi, and John Lygeros are with the Automatic Control Laboratory, ETH Z\"urich, 8092 Z\"urich, Switzerland (e-mail: \texttt{mfochesato@ethz.ch, lfalconi@ethz.ch, jlygeros@ethz.ch}). Mattia Zorzi and Augusto Ferrante are with the Department of Information Engineering, University of Padova, Padova, Italy (e-mail: \texttt{zorzimat@dei.unipd.it, augusto@dei.unipd.it}).
}%
}
\begin{document}

\maketitle
\thispagestyle{empty}
\pagestyle{empty}

\IEEEpeerreviewmaketitle

\begin{abstract}                          
    The Linear Quadratic Gaussian (LQG) controller is known to be inherently fragile to model misspecifications common in real-world situations. We consider discrete-time partially observable stochastic linear systems and provide a robustification of the standard LQG against distributional uncertainties on the process and measurement noise. Our distributionally robust formulation specifies the admissible perturbations by defining a relative entropy based ambiguity set individually for each time step along a finite-horizon trajectory, and minimizes the worst-case cost across all admissible distributions. We prove that the optimal control policy is still linear, as in standard LQG, and derive a computational scheme grounded on iterative best response that provably converges to the set of saddle points. Finally, we consider the case of endogenous uncertainty captured via decision-dependent ambiguity sets and we propose an approximation scheme based on dynamic programming.
    
    \end{abstract}

    \section{Introduction}
    The Linear Quadratic Regulator (LQG) is a cornerstone of control theory, and has been applied across several domains ranging from engineering, to economics, to computer science. It involves controlling a linear system subject to additive disturbances via the design of a control policy that minimizes a quadratic cost function. For linear systems, quadratic costs and additive Gaussian noise, it is well-known that the resulting optimal policy is linear in the observations. However, in practice, the system dynamics are often only approximately known, and the noise is not necessarily Gaussian. This leads to a mismatch between the nominal model and the true system, which can adversely affect performances \cite{doyle1978guaranteed}. 
    
    We consider a generalization of the finite-horizon LQG framework for discrete-time linear stochastic systems subject to model mismatch. We interpret the problem as a zero-sum game between the controller and a distribution chosen from an ambiguity set that may change at each time step. Each ambiguity set is represented as a ball defined in the Kullback-Leibler (KL) divergence centered at a nominal  Gaussian distribution. The goal of the control designer is then to synthesize a control policy that minimizes the worst-case expected cost. We refer to this problem as the \textit{Distributionally Robust LQG} (DR-LQG) problem with KL ambiguity sets. 

    For the DR-LGQ problem, we show that, even under distributional ambiguity, the optimal control policy is linear in the observations, as in the standard LQG. Following \cite{taskesen2023distributionally,lanzetti2024optimality} we re-parametrize the control policy in terms of purified observations and propose a novel "sandwich" argument to show that the DR-LQG is optimally solved by a linear policy and that the worst-case distribution is still Gaussian. We show that the resulting game admits a Nash equilibrium and provide a numerical scheme based on regularized best response that converges linearly to the set of saddle points of the DR-LQG. The best responses admits closed-form expression, making the algorithm computationally attractive. We further consider the case of endogenous uncertainty captured via decision-dependent ambiguity sets, well suited to capture perturbations in the system matrices. We derive an approximated closed-form recursion based on dynamic programming, and use it within a coordinate gradient descent scheme to solve the problem with linear convergence rate guarantees. Finally, we provide numerical simulations to illustrate the effectiveness of our approaches.

    \subsubsection*{Literature review}
    Traditionally, the optimal control of systems subject to uncertainty has been studied through the LQR/LQG theory by synthesizing policies that either minimize a quadratic functional in the state and control action pair, or the $\mathcal{H}_2$ norm of the system's transfer function \cite{hassibi1999indefinite}. Motivated by the fragility of the LQG against model mismatch \cite{doyle1978guaranteed}, several robustification approaches were suggested in the literature. For example, if the perturbation is modeled as adversarial with \textit{known} finite energy bound, the  $\mathcal{H}_\infty$ synthesis approach can be used to minimize the infinity norm of the transfer function
    mapping disturbances to a measurable performance metric \cite{zhou1998essentials}. While stability is guaranteed under any allowable process noise signal, the $\mathcal{H}_\infty$ controller might result in conservative behaviors as it plans for the worst uncertainty. This motivated the development of combined $\mathcal{H}_2 / \mathcal{H}_\infty$ strategies, trying to alleviate part of the conservatism. A different approach is represented by risk-sensitive control that incorporates risk sensitivity into the control design task by minimizing the entropic risk measure of the cost, rather than its expected value \cite{jacobson1973optimal,whittle1981risk}. 

    Distributional robust control (DRC) is an alternative paradigm that is gaining momentum \cite{van2015distributionally,hakobyan2022wasserstein,fochesato2022data,aolaritei2023wasserstein,petersen2000minimax,hakobyan2024wasserstein,brouillon2023distributionally,falconi2025distributionally}. The DRC problem seeks a control policy that minimizes the expected cost under the worst-case noise distribution in an ambiguity set, i.e., a set of probability distributions among which we can reasonably expect to find the true noise distribution. DRC robustifies against model misspecifications in the space of probabilities. Different types of ambiguity sets have been proposed in the literature based on moment constraints \cite{van2015distributionally}, total variation distance \cite{tzortzis2015dynamic}, Wasserstein distance \cite{aolaritei2023wasserstein} (or, more broadly, optimal transport distances), and $\phi-$divergences \cite{petersen2000minimax}. 

    In \cite{taskesen2023distributionally} a DR-LQG problem similar to the one considered here is solved by considering ambiguity sets defined on the basis of the Wasserstein distance. Compared to \cite{taskesen2023distributionally}, (i) we motivate the choice of the KL distance by providing interesting connections with system identification and risk measure theory, (ii) we prove optimality of the linear policies for the DR-LQG problem with KL ambiguity sets, and (iii) we provide a novel best response algorithm that exploits the availability of closed-form expressions for the best responses, unlike the gradient-based method suggested in \cite{taskesen2023distributionally} that relies on automatic differentiation. 
    
  The dynamic-programming based recursion for decision-dependent ambiguity sets, is related to the approaches in \cite{hakobyan2024wasserstein} and \cite{hakobyan2022wasserstein}. These works address a regularized relaxed problem with a fixed penalty value in the objective function; hence, there are no guarantees that the worst-case  distribution, against which the controller is hedging, belongs to the specified ambiguity set. On the contrary, our formulation addresses the exact constrained formulation. Additionally, in \cite{hakobyan2024wasserstein} and \cite{hakobyan2022wasserstein} distributed uncertainty modeling is not considered. Finally, in \cite{hakobyan2024wasserstein} and \cite{hakobyan2022wasserstein} the problem is solved by neglecting a crucial non-linear dependence in the optimality equations; we tackle this problem by linearizing this dependence (instead of disregarding it) with obvious advantages in terms of performances, as demonstrated in the numerical section.

    \subsubsection*{Outline} 
In Section II we present the necessary background, while in Section III we describe the problem statement. In Section IV the theoretical properties of the DR-LQG are analyzed and in Section V a computational framework is presented. Section VI considers the case with endogenous ambiguity sets, while in Section VII simulation results are reported. Finally, we draw our conclusions in Section VIII. We relegate all proofs to the appendix.

    \section{Preliminaries}\label{sec:preliminaries}
   
        \subsubsection*{Notation} $\mathbb{R}^n, \mathbb{R}^n_+, \mathbb{R}^n_{++}$ denote the set of reals, nonnegative reals, and positive reals, respectively. $\mathbb{S}^n_{+}$ (resp. $\mathbb{S}^n_{++}$) denotes the cone of $n\times n$ symmetric positive semi-definite (resp. positive definite) matrices. For any $A,B \in \mathbb{S}_+^n$, the relation $A\succeq B$ ($A \succ B$) means that $A - B \in \mathbb{S}^n_{+}$($A - B \in \mathbb{S}^n_{++}$). Given a matrix $A$, $A^\top$ denotes its transpose and $|A|$ its determinant. 
    All random object are defined on a probability space $(\Omega, \mathcal{F}, \mathbb{P})$; thus, the distribution of a random vector $\xi : \Omega \rightarrow \mathbb{R}^n$ is given by the pushforward distribution $\mathbb{P}_\xi = \mathbb{P} \circ \xi^{-1}$ of $\mathbb{P}$ with respect to $\xi$. We denote as $\mathcal{P}(\mathbb{R}^n)$ the set of absolutely continuous probability distributions on $\mathbb{R}^n$, and as $\mathcal{P}_\mathcal{G}(\mathbb{R}^n)$ its restriction to the set of Gaussians on $\mathbb{R}^n$. Finally, we denote as $\mathcal{N}(\mu, \Sigma)$ a Gaussian distribution with mean $\mu$ and covariance $\Sigma$.

    \subsubsection*{Relative entropy ambiguity sets} Given any two probability distributions $\mathbb{P}_1, \mathbb{P}_2 \in \mathcal{P}(\mathbb{R}^n)$, a KL-ambiguity set of radius $\rho > 0$ centered at $\mathbb{P}_1$ is
    $$
    \mathcal{B} := \{ \mathbb{P}_2 \in \mathcal{P}(\mathbb{R}^n), \: R(\mathbb{P}_2 || \mathbb{P}_1) \leq \rho\},
    $$
  where $R(\mathbb{P}_2 || \mathbb{P}_1)$ is the KL divergence from distribution $\mathbb{P}_2$ to the centre $\mathbb{P}_1$. For $\mathbb{P}_1, \mathbb{P}_2$ admitting densities $p_1(x), p_2(x)$, respectively, then 
\begin{equation}\label{KL:density}
  R(\mathbb{P}_2 || \mathbb{P}_1) = \int p_2(x) \log \frac{p_2(x)}{p_1(x)} dx.
\end{equation}
We do not differentiate between $\mathbb{P}_i$ and its density $p_i(x)$ in the following, when clear from the context. 

It is known that $R(\mathbb{P}_2 || \mathbb{P}_1)$ is not a distance since it is not symmetric and does not obey the triangular inequality; however, it is a pseudo-distance since it satisfies (i) $R(\mathbb{P}_2 || \mathbb{P}_1) \geq 0$ and (ii) $R(\mathbb{P}_2 || \mathbb{P}_1) = 0$ if and only if $\mathbb{P}_2 = \mathbb{P}_1$. Moreover, for a given $\mathbb{P}_1$, $R(\mathbb{P}_2 || \mathbb{P}_1)$ is a strictly convex function of $\mathbb{P}_2$ on $\mathcal{P}(\mathbb{R}^n)$. When restricted to Gaussian distributions, the KL divergence admits a closed form expression \cite{mackay2003information}: For  $\mathbb{P}_1 \triangleq \mathcal{N}({\mu}_1,{\Sigma}_1)$ and $\mathbb{P}_2 \triangleq \mathcal{N}({\mu}_2,{\Sigma}_2) \in \mathcal{P}_\mathcal{G}(\mathbb{R}^n)$, with $\Sigma_1, {\Sigma}_2 \succ 0$, it reads

    \begin{equation}\label{eq:tightness}
    \begin{aligned}
    R(\mathbb{P}_2 || \mathbb{P}_1)\!  = &  \frac{1}{2}\Big( ({\mu}_2 - \mu_1)^\top {\Sigma}_1^{-1}({\mu}_2 - \mu_1) + \emph{Tr}( {\Sigma}_1^{-1} {\Sigma}_2) \\
    &  - n + \ln \frac{|{\Sigma}_1|}{|{\Sigma}_2|}  \Big).
    \end{aligned}
\end{equation}

    \subsubsection*{Convex optimization} Finally, we recall some results from convex optimization.
    \begin{lemma}[Pointwise maximization \cite{boyd:vandenberghe:2004}, p.~81]\label{lemm:convexity_max}
    Let $X$ be a normed space and $\{h_i(x)|i \in I\}$ be a collection of functions with the same domain $\mathcal{H}$. Assume that $\mathcal{H}$ is a convex subset of $X$ and that  $h_i(x)$ is convex for each $i$. Then
    $
    g(x) \coloneqq \sup_{i \in I}h_i(x)
    $
    is also convex.
    \end{lemma}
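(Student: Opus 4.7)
The plan is to proceed directly from the definition of convexity, exploiting that the supremum preserves the inequality. The key observation is that for the supremum $g(x) := \sup_{i \in I} h_i(x)$ to be convex, we need to verify that for any $x, y \in \mathcal{H}$ and any $\lambda \in [0,1]$, the convex combination inequality holds; convexity of $\mathcal{H}$ ensures $\lambda x + (1-\lambda)y \in \mathcal{H}$, so $g$ is well-defined on the interpolation.

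First I would fix an arbitrary index $i \in I$ and use convexity of the corresponding $h_i$ to write
\begin{equation*}
h_i(\lambda x + (1-\lambda)y) \leq \lambda h_i(x) + (1-\lambda) h_i(y).
\end{equation*}
Next I would bound each of the two terms on the right hand side from above by $g(x)$ and $g(y)$, respectively, using the very definition of the pointwise supremum, namely $h_i(x) \leq g(x)$ and $h_i(y) \leq g(y)$ for every $i \in I$. This yields
\begin{equation*}
h_i(\lambda x + (1-\lambda)y) \leq \lambda g(x) + (1-\lambda) g(y),
\end{equation*}
valid for every $i \in I$. Finally, since the right hand side does not depend on $i$, I would take the supremum over $i \in I$ on the left hand side to conclude
\begin{equation*}
g(\lambda x + (1-\lambda)y) \leq \lambda g(x) + (1-\lambda) g(y),
\end{equation*}
which is exactly the convexity of $g$ on $\mathcal{H}$.

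I do not anticipate any genuine obstacle here: the statement is the classical "supremum of convex functions is convex" result, and the only subtleties are notational, namely being careful that (i) the inequality is preserved when passing to the supremum on one side but not the other, and (ii) the case where $g(x)$ or $g(y)$ equals $+\infty$ is handled trivially since the inequality becomes vacuous. No assumption on $I$ (e.g., countability or topology) is needed, nor continuity of the $h_i$.
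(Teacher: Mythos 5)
Your proof is correct and is precisely the standard argument behind this classical result, which the paper simply cites from Boyd and Vandenberghe without reproving: fix $i$, apply convexity of $h_i$, bound each term by the pointwise supremum, and then pass to the supremum over $i$ on the left-hand side. Your remarks on well-definedness via convexity of $\mathcal{H}$ and on the extended-real-valued case are appropriate and complete the argument.
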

    \smallskip
    
    \begin{lemma}[Partial minimization rule \cite{boyd:vandenberghe:2004}, p.~87] \label{lemm:convexity_min}
    Let $h(x,y)$ be jointly convex in $(x,y)$ and $K$ be a convex non-empty set. Then, the function
    $
    g(x) = \inf_{y \in K} h(x,y)
    $
    is convex in $x.$ 
    \end{lemma}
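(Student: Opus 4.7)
The plan is to verify the definition of convexity of $g$ directly via an $\varepsilon$-slack argument. I would fix arbitrary $x_1, x_2 \in \dom g$, a coefficient $\lambda \in [0,1]$, and a tolerance $\varepsilon > 0$. Since $g(x_1), g(x_2) < +\infty$, the definition of infimum lets me pick witnesses $y_1, y_2 \in K$ such that $h(x_i, y_i) \leq g(x_i) + \varepsilon$ for $i = 1, 2$.

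The key step is to use the same convex combination in the $y$-coordinate: setting $x_\lambda \coloneqq \lambda x_1 + (1-\lambda) x_2$ and $y_\lambda \coloneqq \lambda y_1 + (1-\lambda) y_2$, convexity of $K$ guarantees $y_\lambda \in K$, so $y_\lambda$ is feasible in the infimum defining $g(x_\lambda)$. Joint convexity of $h$ in $(x,y)$ would then yield
\[
g(x_\lambda) \leq h(x_\lambda, y_\lambda) \leq \lambda h(x_1, y_1) + (1-\lambda) h(x_2, y_2) \leq \lambda g(x_1) + (1-\lambda) g(x_2) + \varepsilon,
\]
and sending $\varepsilon \downarrow 0$ delivers the convexity inequality for $g$.

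The main subtlety to handle is that the infimum defining $g$ need not be attained, which is precisely why the $\varepsilon$-slack is required; a direct substitution of arg-mins would be cleaner but would rely on an attainment hypothesis that the statement does not assume. As a free byproduct, the same chain of inequalities shows $h(x_\lambda, y_\lambda) < +\infty$, hence $x_\lambda \in \dom g$, confirming that $\dom g$ is itself convex so that the conclusion is non-vacuous. Since the only ingredients used are joint convexity of $h$ and convexity of $K$, no further assumptions (e.g., lower semicontinuity, properness, finite-dimensionality of the ambient space) are needed.
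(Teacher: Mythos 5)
Your proof is correct: the paper does not prove this lemma at all (it is quoted verbatim from Boyd and Vandenberghe, p.~87), and your $\varepsilon$-slack argument is precisely the standard proof given in that reference. The only (cosmetic) caveat is that your choice of witnesses $y_i$ with $h(x_i,y_i)\leq g(x_i)+\varepsilon$ tacitly assumes $g(x_i)>-\infty$; the degenerate case $g(x_i)=-\infty$ is handled by the same idea with $h(x_i,y_i)\leq -M$ for arbitrary $M$, and is likewise elided in the cited source.
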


    \section{Problem setup}\label{sec:pb}
    Consider a discrete-time stochastic linear system 
    \begin{equation}\label{eq:nominal:system}
    \begin{aligned}
    x_{t+1} & = Ax_t + Bu_t + w_t,\\
    y_t & = Cx_t + v_t,
    \end{aligned}
    \end{equation}
    where $x_t \in \mathbb{R}^n, u_t \in \mathbb{R}^m, y_t \in \mathbb{R}^p$ are the system state, input and output, respectively, and $A \in \mathbb{R}^{n\times n}, B \in \mathbb{R}^{n \times m}, C \in \mathbb{R}^{p\times n}$ are the corresponding system matrices, that we assume to be time-invariant to streamline the presentation\footnote{The theory remains valid in the case of time-dependant parameters.}. 
    The exogenous random vectors $x_0$, $\{w_t\}_{t =0}^{T-1}$ and $\{v_t\}_{t=0}^{T-1}$, corresponding to the initial condition and the process and measurement noises, are assumed to be mutually independent and follow probability distributions $\mathbb{P}_{x_0}$, $\left\{\mathbb{P}_{w_t}\right\}_{t=0}^{T-1}$, and $\left\{\mathbb{P}_{v_t}\right\}_{t=0}^{T-1}$, respectively. We assume without loss of generality that $\Omega= \mathbb{R}^n \times \mathbb{R}^{nT} \times \mathbb{R}^{pT}$ is the space of realizations of the exogenous uncertainties, $\mathcal{F}$ is the Borel $\sigma$-algebra on $\Omega$ and $\mathbb{P}=\mathbb{P}_{x_0} \times \left(\otimes_{t=0}^{T-1} \mathbb{P}_{w_t}\right) \otimes\left(\otimes_{t=0}^T \mathbb{P}_{v_t}\right)$, where $\mathbb{P}_1 \otimes \mathbb{P}_2$ denotes the independent coupling of the distributions $\mathbb{P}_1$ and $\mathbb{P}_2$.
    Further, for $t \geq 0$, let $\mathcal{F}_t = \sigma (y_{0:t})$ be
    the $\sigma$-algebra generated by all observations up to time $t$, and let
    $\mathcal{F}_{-1}$ be the trivial $\sigma$-algebra. We restrict attention to inputs $u_t$ that are $\mathcal{F}_t$-measurable
    with respect to the observations up to time $t$ and assume that there exists a
    measurable function $\pi_t $ such that $u_t = \pi_t(y_{0:t})$. The control
    policy $\pi \triangleq \{\pi_t\}_{t=0}^{T-1} \in \mathcal{U}_{\by}$ is the collection of all such functions. 
    

    
    
   We depart from the standard LQG framework, and assume that $\mathbb{P}$ is unknown. Instead, one has only access to a nominal model $\hat{\mathbb{P}}$, for example retrieved via statistical analysis of expert knowledge, that is assumed to be of the form $\hat{\mathbb{P}}=\hat{\mathbb{P}}_{x_0} \otimes \left(\otimes_{t=0}^{T-1} \hat{\mathbb{P}}_{w_t}\right) \otimes\left(\otimes_{t=0}^T \hat{\mathbb{P}}_{v_t}\right)$, where $\hat{\mathbb{P}}_{x_0} \triangleq \mathcal{N}(0 , \hat{W}_{-1}), \hat{\mathbb{P}}_{w_t} \triangleq \mathcal{N}(0 , \hat{W}_{t}), \hat{\mathbb{P}}_{v_t} \triangleq \mathcal{N}(0 , \hat{V}_t)$ for $\hat{W}_t, \hat{V}_t \succ 0$. For $Q, Q_T \in \mathbb{S}_{+}^n$ and $R \in \mathbb{S}_{++}^{m}$, our aim is to solve the following minimax problem:

    \begin{equation}\label{eq:DR-LQG}
        \inf_{\pi} \max _{\mathbb{P} \in \mathcal{B}} \mathbb{E}_{\mathbb{P}}\left[\sum_{t=0}^{T-1}\left(x_t^{\top} Q x_t+u_t^{\top} R u_t\right)+x_T^{\top} Q_T x_T\right],
    \end{equation}
where $\mathcal{B}$ is an ambiguity set is defined as
\begin{equation}\label{ambiguity:set}
\mathcal{B}= \mathcal{B}_{x_0} \otimes \left(\otimes_{t=0}^{T-1} \mathcal{B}_{w_t}\right) \otimes\left(\otimes_{t=0}^{T-1} \mathcal{B}_{v_t}\right)\\
\end{equation}
where
$$
\begin{aligned}
& \mathcal{B}_{x_0}\!=\!\left\{\mathbb{P}_{x_0} \! \in\! \mathcal{P}\left(\mathbb{R}^n\right): R\left({\mathbb{P}}_{x_0}|| \hat{\mathbb{P}}_{x_0}\right)\! \leq\! \rho_{x_0}, \: \mathbb{E}_{\mathbb{P}_{x_0}}[x_0] \!=\! 0\right\} \\
& \mathcal{B}_{w_t}\!=\!\left\{\mathbb{P}_{w_t} \!\in \!\mathcal{P}\left(\mathbb{R}^n\right): R\left({\mathbb{P}}_{w_t}|| \hat{\mathbb{P}}_{w_t}\right) \!\leq \!\rho_{w_t}, \: \mathbb{E}_{\mathbb{P}_{w_t}}[w_t] \!=\! 0\right\} \\
& \mathcal{B}_{v_t}\!=\!\left\{\mathbb{P}_{v_t} \!\in\! \mathcal{P}\left(\mathbb{R}^m\right): R\left({\mathbb{P}}_{v_t}|| \hat{\mathbb{P}}_{v_t}\right) \!\leq \!\rho_{v_t}, \: \mathbb{E}_{\mathbb{P}_{v_t}}[v_t] \!=\! 0\right\},
\end{aligned}
$$
for user-defined $\rho_{x_0}, \rho_{w_t}, \rho_{v_t} \geq 0$. Note that by construction all exogenous random variables $x_0, w_0, \ldots, w_{T-1}, v_0, \ldots, v_{T-1}$ are mutually independent under every distribution in $\mathcal{B}$. The restriction to zero-mean distributions, both in the nominal and in the perturbed distributions, is done to simplify the presentation; the results can be extended to the case of non-zero mean distributions with minor modifications. Note also that the DR-LQG problem \eqref{eq:DR-LQG} constitutes a zero-sum game between the control policy and a fictitious adversary that selects $\mathcal{B}$.

\subsection{On the choice of the KL-divergence}
We briefly review some benefits of the use of KL divergence to define ambiguity sets.


\textbf{Connection with system identification.} Consider the problem of estimating the parameters $\theta = (A,B,C)$ of a linear system of the form \eqref{eq:nominal:system} from noisy input-output data $\{(u_t, y_t)\}$. One of the most widely used paradigms to accomplish this task is maximum likelihood estimation (MLE), where the log-likelihood $\mathcal{L}(\theta)$ is maximized with respect to $\theta$. \cite{ljung1999system} provides an attractive interpretation of the MLE problem as the search for a model that minimizes the KL-divergence to the true system. Building on this interpretation, we can directly employ $\mathcal{L}(\theta^\star)$ to provide a meaningful estimate of the size of the KL ambiguity sets.


\textbf{Connection with risk measures.} For $p \in [1,\infty)$, let $\mathcal{L}_p(\Omega, \mathcal{F}, \mathbb{P})$ be the space of random variables $\xi: \Omega \rightarrow \mathbb{R}$ with finite $p-$th moment with respect to the measure $\mathbb{P}$. Then, a risk measure  $\rho(\xi)$ with $\rho: \mathcal{L}_p(\Omega, \mathcal{F}, \mathbb{P}) \rightarrow {\mathbb{R}} \cup \{\infty\}$ is a mapping that maps $\xi$ to the extended real line, quantifying its "riskiness". Originating in economics, risk measures are now popular in the control community to allow for a systematic approach to risk assessment (e.g., for safety-critical systems) \cite{akella2024risk}. Among them, the class of coherent risk measures is the most widespread thanks to their appealing properties \cite[Chapter~6, p.231]{shapiro2021lectures}. Exploiting the Fenchel-Moreau theorem, it is possible to show that \textit{any} coherent risk measure can be written as
\begin{equation}\label{eq:coherent}    
\mathfrak{R}(\xi) = \sup_{\frac{d\mathbb{Q}}{d\mathbb{P}}\in \mathcal{A}} \mathbb{E}_{\mathbb{Q}}[\xi], \quad \forall \xi \in \mathcal{L}_p(\Omega, \mathcal{F}, \mathbb{P}),
\end{equation}
where $\mathcal{A}$ is a set of probability density functions, and the measure $\mathbb{Q}$ is such that $\mathbb{Q} << \mathbb{P}$. This provides a clear  connection with distributionally robust optimization with KL-based ambiguity sets.

\begin{example}[Conditional Value at Risk]
The CVaR of level $\beta \in (0,1)$ can be described as in \eqref{eq:coherent} with \cite{shapiro2021lectures} 
$$
\mathcal{A} = \textstyle\left\{ \mathbb{Q} << \mathbb{P} \:\Big| \:\frac{d\mathbb{Q}}{d\mathbb{P}} \geq 0, \mathbb{E}_{\mathbb{P}}\left[\frac{d\mathbb{Q}}{d\mathbb{P}}\right] = 1, \frac{d\mathbb{Q}}{d\mathbb{P}} \leq \frac{1}{\beta} \right\}. 
$$
Let $p(x), q(x)$ be the densities of $\mathbb{P}, \mathbb{Q}$. We have:
$$
\begin{aligned}
\textstyle \int q(x) \log \left( \frac{q(x)}{p(x)} \right) dx \leq  \log\left(\frac{1}{\beta}\right) \int d\mathbb{Q}  =  \log\left(\frac{1}{\beta}\right),
\end{aligned}
$$
since $\int q(x) dx = 1$. Consider now the LQG functional in \eqref{eq:DR-LQG}, and assume that instead of the expectation we want to consider the CVaR as we are interested in accounting for the tail-risk. Then, the cost functional becomes
\begin{equation}\label{eq:CVAR:DR}
\begin{aligned}
& \inf_{\pi} \:\mathfrak{R}\left[\sum_{t=0}^{T-1}\left(x_t^{\top} Q x_t+u_t^{\top} R u_t\right)+x_T^{\top} Q_T x_T\right] \\
= &\min _{\mathbf{x}, \mathbf{u}} \: \mathfrak{R}\left[\mathbf{u}^{\top} \mathbf{R} \mathbf{u}+\mathbf{x}^{\top} \mathbf{Q} \mathbf{x}\right] 
\end{aligned}
\end{equation}
where for the second line we borrow the stacked notation from the upcoming Section \ref{sec:theory}, and the system evolves as in \eqref{eq:nominal:system}. For $\xi = \mathbf{u}^{\top} \mathbf{R} \mathbf{u}+\mathbf{x}^{\top} \mathbf{Q} \mathbf{x}$, we then conclude that $\sup_{\mathbb{Q} \in \mathcal{B}_\rho} \mathbb{E}_\mathbb{Q}[\xi]$, with $\rho = \textstyle \log\left(\textstyle\frac{1}{\beta}\right)$, represents a conservative approximation of \eqref{eq:CVAR:DR}. While solving \eqref{eq:CVAR:DR} is challenging \cite{chapman2021toward}, we will show that \eqref{eq:DR-LQG} can be solved effectively, thus paving the way for novel risk-aware formulations as \cite{tsiamis2021linear}. $\hfill \triangle$
\end{example}

\section{Theoretical analysis of the DR-LQG}\label{sec:theory}


\subsection{Problem re-parametrization}
As pointed out in \cite{taskesen2023distributionally}, in the LQG formulation the inputs are subject to a cyclic dependence, which makes the analysis of \eqref{eq:DR-LQG} hard. To break this dependency, we proceed as in \cite{taskesen2023distributionally} and introduce a "fictitious" noise-free system
\begin{equation}\label{eq:purified}
\begin{aligned}
\hat{x}_{t+1}&=A \hat{x}_t+B u_t, \quad  \hat{y}_t&=C \hat{x}_t 
\end{aligned}
\end{equation}
with states $\hat{x}_t \in \mathbb{R}^n$ and outputs $\hat{y}_t \in \mathbb{R}^p$, initialized with $\hat{x}_0=0$ and subject to the same inputs $u_t$ as the original system \eqref{eq:nominal:system}. We define the purified observation at time $t$ as $\eta_t=y_t-\hat{y}_t$ and use $\boldeta=\left(\eta_0, \ldots, \eta_{T-1}\right)$ to denote the sequence of purified observations.
Note that since the inputs $u_t$ are causal, we can compute $\hat{x}_t$ and $\hat{y}_t$ from $y_0, \ldots, y_t$. Thus, $\eta_t$ can be represented as a function of $y_0, \ldots, y_t$. Conversely, $y_t$ can also be represented as a function of $\eta_0, \ldots, \eta_t$. Moreover, any measurable function of $y_0, \ldots, y_t$ can be expressed as a measurable function of $\eta_0, \ldots, \eta_t$ and viceversa \cite[Proposition II.1]{hadjiyiannis2011efficient}. Let $\mathcal{U}_{\boldeta}$ be the sequence of control inputs $\bu = \left(u_0, u_1, \ldots, u_{T-1}\right)$ so that $u_t=\tilde{\pi}_t\left(\eta_0, \ldots, \eta_t\right)$ for some measurable function $\tilde{\pi}_t: \mathbb{R}^{p(t+1)} \rightarrow \mathbb{R}^m$ for every $t = \{0, \ldots, T-1\}$. Then, \cite[Proposition II.1]{hadjiyiannis2011efficient} implies that $\mathcal{U}_{\boldeta}=\mathcal{U}_\by$. 

Let $\mathbf{Q} \in \mathbb{S}^{n(T+1)}, \mathbf{R} \in \mathbb{S}^{m T}, \mathbf{C} \in \mathbb{R}^{p T \times n(T+1)}, \mathbf{G} \in \mathbb{R}^{n(T+1) \times n(T+1)}$ and $\mathbf{H} \in \mathbb{R}^{n(T+1) \times m T}$ be block matrices as defined in the Appendix A. Further, let $\mathbf{x} = (x_0,\ldots, x_T), \: \mathbf{u} = (u_0,\ldots, u_{T-1}), \: \mathbf{y} = (y_0,\ldots, y_{T-1}), \: \mathbf{w} = (x_0, w_0, \ldots, w_{T-1}), \: \mathbf{v} = (v_0,\ldots, v_{T-1})$. Using the stacked system matrices, we can now express the purified observation process $\mathbf{\eta}$ as a linear function of the exogenous uncertainties $w$ and $v$. Specifically, one can easily verify that $\boldsymbol{\eta}=\mathbf{D} \mathbf{w}+\mathbf{v}$, where $\mathbf{D}=\mathbf{C} \mathbf{G}$. Note that the purified observations are independent of the inputs, hence breaking the initial cyclic dependence. In view of this re-parametrization, \eqref{eq:DR-LQG} is equivalent to
\begin{equation}
 \begin{aligned}
(P) := \quad & \min _{\mathbf{x}, \mathbf{u}}  \max _{\mathbb{P} \in \mathcal{B}} \mathbb{E}_{\mathbb{P}}\left[\mathbf{u}^{\top} \mathbf{R} \mathbf{u}+\mathbf{x}^{\top} \mathbf{Q} \mathbf{x}\right] \\
& \text { s.t. } \:  \bu \in \mathcal{U}_{\boldeta}, \quad \bx=\bH \bu+\bG \bw.
\end{aligned}
\end{equation}
In the remainder, we will then focus without loss of generality on the re-parametrized problem $(P)$. 


\subsection{Analysis of the upper bound}
Consider the following problem\begin{equation}\label{eq:upper}
    \begin{aligned}
   (U) := \quad & \min _{\bx, \bu}  \max _{\mathbb{P} \in \tilde{\mathcal{B}}} \mathbb{E}_{\mathbb{P}}\left[\bu^{\top} \bR \bu+\bx^{\top} \bQ \bx\right] \\
   & \text { s.t. } \:  \bu \in \mathcal{U}^{\text{lin}}_{\boldeta}, \quad \bx=\bH \bu+\bG \bw,
   \end{aligned}
   \end{equation}
   where $\tilde{\mathcal{B}}$ is an ambiguity set (to be defined next) such that $\mathcal{B} \subseteq \tilde{\mathcal{B}}$, and $\mathcal{U}^{\text{lin}}_{\boldeta} $ denotes the class of affine policies of the form $\bu = \bU \boldeta + \bq$ with $\bU \in \mathbb{R}^{m T \times p T}$ being block lower triangular to enforce causality and $\bq \in \mathbb{R}^{m T}$. Clearly, $(U)$ is an upper bound to $(P)$ since we are simultaneously restricting the feasible space of the controller, while enlarging the one of the adversary. 
   We begin by defining $\tilde{\mathcal{B}}$.
\begin{proposition}\label{KL:bound}
    Consider an arbitrary zero-mean distribution $\mathbb{P} \in \mathcal{P}(\mathbb{R}^n)$ with covariance $\Sigma_p$ and a Gaussian distribution $\mathbb{Q} \triangleq \mathcal{N}(0, \Sigma_q) \in \mathcal{P}_{\mathcal{G}}(\mathbb{R}^n)$. Then,
    \begin{equation}
    R(\mathbb{P} \| \mathbb{Q}) \geq \frac{1}{2} \left( \emph{Tr}(\Sigma_q^{-1} \Sigma_p) - n + \log \frac{\det \Sigma_q}{\det \Sigma_p} \right).
    \end{equation}
    \end{proposition}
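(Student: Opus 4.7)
My plan is to reduce the general statement to the known Gaussian closed form by introducing an auxiliary Gaussian that matches the first two moments of $\mathbb{P}$. Concretely, let $\tilde{\mathbb{P}} \triangleq \mathcal{N}(0,\Sigma_p)$ and, assuming $\Sigma_p \succ 0$ so that $\tilde{\mathbb{P}}$ admits a density $\tilde{p}$, denote by $p$ and $q$ the densities of $\mathbb{P}$ and $\mathbb{Q}$ respectively (if $\mathbb{P}$ is not absolutely continuous with respect to $\mathbb{Q}$ the inequality is trivial, since the left-hand side is $+\infty$). The idea is the standard ``moment-matching'' decomposition
\begin{equation*}
R(\mathbb{P}\|\mathbb{Q}) \;=\; \int p(x)\log\frac{p(x)}{\tilde p(x)}\,dx \;+\; \int p(x)\log\frac{\tilde p(x)}{q(x)}\,dx.
\end{equation*}

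The first integral is $R(\mathbb{P}\|\tilde{\mathbb{P}})\ge 0$ by the Gibbs inequality (property (i) recalled after \eqref{KL:density}), so it may be dropped to obtain a lower bound. For the second integral I would exploit the fact that both $\tilde p$ and $q$ are zero-mean Gaussian densities: the log-ratio
\begin{equation*}
\log\frac{\tilde p(x)}{q(x)} \;=\; \tfrac{1}{2}\,x^{\top}\bigl(\Sigma_q^{-1}-\Sigma_p^{-1}\bigr)x \;+\; \tfrac{1}{2}\log\frac{\det\Sigma_q}{\det\Sigma_p}
\end{equation*}
is a purely quadratic form in $x$ plus a constant (the linear terms vanish because both Gaussians have zero mean). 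Consequently, its expectation under $\mathbb{P}$ depends only on the second moments of $\mathbb{P}$, which by hypothesis coincide with those of $\tilde{\mathbb{P}}$. Therefore
\begin{equation*}
\int p(x)\log\frac{\tilde p(x)}{q(x)}\,dx \;=\; \int \tilde p(x)\log\frac{\tilde p(x)}{q(x)}\,dx \;=\; R(\tilde{\mathbb{P}}\|\mathbb{Q}),
\end{equation*}
and the latter quantity is given in closed form by \eqref{eq:tightness} with $\mu_1=\mu_2=0$, $\Sigma_1=\Sigma_q$, $\Sigma_2=\Sigma_p$, which is precisely the claimed right-hand side.

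The only delicate point is the moment-matching step: one must justify that $\mathbb{E}_{\mathbb{P}}[x^{\top}Mx]=\mathrm{Tr}(M\Sigma_p)=\mathbb{E}_{\tilde{\mathbb{P}}}[x^{\top}Mx]$ for $M=\Sigma_q^{-1}-\Sigma_p^{-1}$, which is immediate from the assumption that $\mathbb{P}$ has zero mean and covariance $\Sigma_p$ (and finite second moments, implicitly required for the right-hand side to be meaningful). The degenerate case $\Sigma_p \not\succ 0$ can be dispatched separately: then $\log(\det\Sigma_q/\det\Sigma_p)=+\infty$ while $\mathrm{Tr}(\Sigma_q^{-1}\Sigma_p)$ is finite, so the right-hand side is $+\infty$, and one checks that $\mathbb{P}$ cannot be absolutely continuous with respect to $\mathbb{Q}$ on the subspace where $\Sigma_p$ is degenerate, making the left-hand side also $+\infty$. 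No fancier tool than Gibbs' inequality plus the closed form \eqref{eq:tightness} is needed.
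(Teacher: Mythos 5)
Your proof is correct and is essentially the paper's argument in different packaging: the paper writes $R(\mathbb{P}\|\mathbb{Q})=-h(p)-\mathbb{E}_{\mathbb{P}}[\log q]$, computes the cross term exactly (it depends only on $\Sigma_p$), and bounds $-h(p)$ from below by the maximum-entropy property of the Gaussian, which is precisely your step $R(\mathbb{P}\|\tilde{\mathbb{P}})\geq 0$ after the same moment-matching observation. Your Pythagorean decomposition $R(\mathbb{P}\|\mathbb{Q})=R(\mathbb{P}\|\tilde{\mathbb{P}})+R(\tilde{\mathbb{P}}\|\mathbb{Q})$ has the minor virtue of exhibiting the slack in the bound explicitly, but the two proofs use the same two ingredients and the same computation.
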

 The bound in Proposition \ref{KL:bound} depends only on the covariance matrices $\Sigma_p$ and $\Sigma_q$, making it analogous to the Gelbrich bound for the type-2 Wasserstein distance. Further, the derivation can be easily extended to the case of non-zero mean distributions, since the differential entropy is translation invariant. Consider the setting of Proposition \ref{KL:bound} and let  ${\mathcal{B}} = \{\mathbb{P}\; : \; R(\mathbb{P} \| \mathbb{Q})  \leq \rho \}$ and $\tilde{\mathcal{B}} = \{\mathbb{P}\; : \; \frac{1}{2} \left( \text{Tr}(\Sigma_q^{-1} \Sigma_p) - n + \log \frac{\det \Sigma_q}{\det \Sigma_p} \right) \leq \rho \}$. It immediately follows
 \begin{equation}\label{lemma:ball:inclusion}
 \mathcal{B} \subseteq \tilde{\mathcal{B}}.
    \end{equation}
   
    Next, we turn our attention to the minimization problem. By the definition of $\boldeta$ and the restriction to linear policies,
    \begin{equation}\label{eq:linear:policy}
     \bu = \bU(\bD\bw + \bv) + \bq,\quad \bx = \bH\left( \bU(\bD\bw + \bv) + \bq \right) + \bG\bw.
     \end{equation}
     Taking the expectation of the quadratic form and by definition of $\tilde{\mathcal{B}}$, the upper bound $(U)$ becomes\footnote{To simplify the notation, we shift the index $i$ of $W_i$ by one.}
\begin{equation*}\label{eq:reformulation:1}
\begin{aligned}
\min_{\bU,\bq} \max_{\bW,\bV \succeq 0} \: & \text{Tr}(\bF_1 \bW +  \bF_2 \bV) + \bq^\top \bM \bq\\
\text{s.t.} \: & \bW = \text{diag}(W_0, \ldots, W_{T}), \\
 & \bV = \text{diag}(V_0, \ldots, V_{T-1}),\\
& \frac{1}{2} \left( \text{Tr}(\hat{W}_t^{-1} W_t) \!-\! n \!+\! \log \frac{\det \hat{W}_t}{\det W_t} \right) \!\leq\! \rho_{w_t},  \forall t\\
& \frac{1}{2} \left( \text{Tr}(\hat{V}_t^{-1} V_t)\! -\! n \!+ \!\log \frac{\det \hat{V}_t}{\det V_t} \right)\! \leq \!\rho_{v_t},  \forall t.
\end{aligned}
\end{equation*}
where 
$$
\begin{aligned}
\bF_1 & \!= \!\bD^\top \bU^\top \bR \bU\bD \!+ \!(\bH\bU\bD \!+ \!\bG)^\top \bQ(\bH\bU\bD \!+\! \bG)\! \in \!\mathbb{S}_{+}^{nT},\\
\bF_2 & \!=\! \bU^\top \bR \bU \!+\! \bU^\top  \bH^\top \bQ \bH \bU \!\in\! \mathbb{S}_{+}^{pT},\\
\bM & \!=\! \bR \!+\! \bH^\top \bQ \bH \!\in \!\mathbb{R}^{nT}.
\end{aligned}
$$

\subsection{Analysis of the lower bound}
Consider now the problem
\begin{equation}\label{eq:lower:2}
 \begin{aligned}
(L) := \quad & \min _{\bx, \bu}  \max _{\mathbb{P} \in \mathcal{G}} \mathbb{E}_{\mathbb{P}}\left[\bu^{\top} \bR \bu+\bx^{\top} \bQ \bx\right] \\
& \text { s.t. } \:  \bu \in \mathcal{U}_{\boldeta}, \quad \bx=\bH \bu+\bG \bw,
\end{aligned}
\end{equation}
where $\mathcal{G}$ is defined as 
$$\mathcal{G}= \mathcal{G}_{x_0} \otimes \left(\otimes_{t=0}^{T-1} \mathcal{G}_{w_t}\right) \otimes\left(\otimes_{t=0}^{T-1} \mathcal{G}_{v_t}\right)$$ 
with 
$$
\begin{aligned}
& \mathcal{G}_{x_0}\!=\!\left\{\mathbb{P}_{x_0} \!\in\! \mathcal{P}_{\mathcal{G}}\left(\mathbb{R}^n\right): R\left({\mathbb{P}}_{x_0}, \hat{\mathbb{P}}_{x_0}\right) \!\leq \!\rho_{x_0}, \: \mathbb{E}_{\mathbb{P}_{x_0}}[x_0] \!=\! 0\right\} \\
& \mathcal{G}_{w_t}\!=\!\left\{\mathbb{P}_{w_t} \!\in \!\mathcal{P}_{\mathcal{G}}\left(\mathbb{R}^n\right): R\left({\mathbb{P}}_{w_t}, \hat{\mathbb{P}}_{w_t}\right)\! \leq \!\rho_{w_t}, \: \mathbb{E}_{\mathbb{P}_{w_t}}[w_t] \!=\! 0\right\} \\
& \mathcal{G}_{v_t}\!=\!\left\{\mathbb{P}_{v_t} \!\in \!\mathcal{P}_{\mathcal{G}}\left(\mathbb{R}^m\right): R\left({\mathbb{P}}_{v_t}, \hat{\mathbb{P}}_{v_t}\right) \!\leq \!\rho_{v_t}, \: \mathbb{E}_{\mathbb{P}_{v_t}}[v_t] \!=\! 0\right\}.
\end{aligned}
$$
Note that $\mathcal{G}$ only contains Gaussian distributions, hence $\mathcal{G} \subseteq \mathcal{B}$ and $(L)$ is a lower bound for $(P)$. Moreover, notice that for any $\mathbb{P} \in \mathcal{G}$, from classical LQG theory, the optimal policy is affine in the observations so we can rewrite $(L)$ as
\begin{equation}\label{eq:lower:2}
    \begin{aligned}
   (L) := \quad & \min _{\bx, \bu}  \max _{\mathbb{P} \in \mathcal{G}} \mathbb{E}_{\mathbb{P}}\left[\bu^{\top} \bR \bu+\bx^{\top} \bQ \bx\right] \\
   & \text { s.t. } \:  \bu \in \mathcal{U}_{\boldeta}^{\text{lin}}, \quad \bx=\bH \bu+\bG \bw.
   \end{aligned}
   \end{equation}

Invoking \eqref{eq:tightness}, the lower bound $(L)$ becomes
\begin{equation*}\label{eq:lower:reformulation}
    \begin{aligned}
    \min_{\bU, \bq} \max_{\bW,\bV \succeq 0} \: & \text{Tr}(\bF_1 \bW +  \bF_2 \bV) + \bq^\top \bM \bq\\
    \text{s.t.} \: & \bW = \text{diag}(W_0, \ldots, W_{T}), \\
     & \bV = \text{diag}(V_0, \ldots, V_{T-1}),\\
    & \frac{1}{2} \left( \text{Tr}(\hat{W}_t^{-1} W_t) \!-\! n \!+\! \log \frac{\det \hat{W}_t}{\det W_t} \right) \!\leq \!\rho_{w_t}, \forall t\\
    & \frac{1}{2} \left( \text{Tr}(\hat{V}_t^{-1} V_t) \!- \!n \!+ \!\log \frac{\det \hat{V}_t}{\det V_t} \right)\! \leq \!\rho_{v_t},  \forall t.
    \end{aligned}
    \end{equation*} 

\subsection{Optimality of linear policies}
The analysis of the upper and lower bounds culminates with the following result, which represents our first contribution.
\begin{theorem}\label{thr:lin:gauss}
    Problem (P) is solved by an affine policy $\bu^\star = \bU^\star \by + \bq^\star$ and a Gaussian distribution $\mathbb{P}^\star \in \mathcal{G}$.
    \end{theorem}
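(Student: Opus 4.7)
The plan is to use a sandwich argument built on the three problems $(L)$, $(P)$, $(U)$ already introduced in the excerpt. Since $\mathcal{G} \subseteq \mathcal{B}$, the lower bound $(L)$ satisfies $(L) \leq (P)$ (the adversary in $(L)$ has a smaller feasible set). Since every linear causal policy is admissible and $\mathcal{B} \subseteq \tilde{\mathcal{B}}$ by Proposition~\ref{KL:bound}, the upper bound satisfies $(P) \leq (U)$. The whole argument therefore reduces to showing $(L) = (U)$; once this is established, both inequalities collapse to equalities and the optimizers of $(L)$ and $(U)$ pin down the claim.

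To show $(L)=(U)$, I would compare the two finite-dimensional reformulations displayed at the end of the upper- and lower-bound subsections. On the upper-bound side, after restricting to policies $\bu = \bU\boldeta + \bq$ and substituting $\boldeta = \bD\bw + \bv$, the objective depends on the adversary only through the covariances $W_t$, $V_t$ (means are fixed to zero), and the constraints defining $\tilde{\mathcal{B}}$ become the Gaussian-KL expressions from Proposition~\ref{KL:bound}. On the lower-bound side, classical LQG theory already guarantees that the inner Gaussian problem is optimally played by linear policies, so $(L)$ can equivalently be written over $\mathcal{U}^{\text{lin}}_{\boldeta}$; and since every distribution in $\mathcal{G}$ is Gaussian, invoking the closed-form expression \eqref{eq:tightness} gives exactly the same covariance-based KL constraint. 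Hence the two programs are syntactically identical and $(L) = (U)$.

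With $(L) = (P) = (U)$ in hand, I would extract the two optimality statements separately. Let $(\bU^\star, \bq^\star, \bW^\star, \bV^\star)$ be any saddle point of the common finite-dimensional min-max. Then the affine policy $\bu^\star = \bU^\star \boldeta + \bq^\star$, re-expressed in the original observations via the bijection $\boldeta \leftrightarrow \by$ recorded after \eqref{eq:purified}, is feasible for $(P)$ and attains value $(U) = (P)$, so it is optimal for $(P)$. Likewise, the Gaussian distribution $\mathbb{P}^\star \in \mathcal{G}$ with covariances $\bW^\star, \bV^\star$ is feasible for $(P)$ and attains value $(L) = (P)$, so it is a worst-case distribution in $\mathcal{B}$.

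The main obstacle I expect is the careful justification of the lower-bound step that $(L)$ is solved within $\mathcal{U}^{\text{lin}}_{\boldeta}$: one has to argue that the usual LQG linearity statement, which holds distribution-by-distribution for each fixed Gaussian $\mathbb{P}$, is preserved after taking the $\max$ over $\mathcal{G}$ without introducing an adversary-dependent policy. A clean way is a minimax inequality followed by noting that for any fixed Gaussian the inner minimization over policies has a linear minimizer (so $\min$ over $\mathcal{U}_{\boldeta}$ coincides with $\min$ over $\mathcal{U}^{\text{lin}}_{\boldeta}$ pointwise in $\mathbb{P}$), which then transfers to the outer problem. Everything else (matching the two covariance programs, invoking Proposition~\ref{KL:bound}, re-parametrizing $\boldeta \leftrightarrow \by$) is bookkeeping.
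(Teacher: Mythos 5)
Your proposal follows essentially the same sandwich argument as the paper: the paper's proof simply observes that the finite-dimensional reformulations of $(U)$ and $(L)$ are identical, concludes $(U)=(P)=(L)$, and converts the purified-observation policy into an output-feedback one via Lemma~\ref{lemma:change}, exactly as you outline. The lower-bound subtlety you flag (passing from $\mathcal{U}_{\boldeta}$ to $\mathcal{U}^{\text{lin}}_{\boldeta}$ under the max over $\mathcal{G}$) is real and is glossed over by the paper as well; your max-min/min-max resolution is the right fix, provided you also invoke the minimax equality for the linear-Gaussian program (Proposition~\ref{strong:duality}) to close the chain $(L)^{\text{lin}}=\max\min\le\min\max=(L)$.
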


Theorem \ref{thr:lin:gauss} establishes that the optimal policy for the DR-LQG problem is linear in the observations, as in the standard LQG setting. Moreover, the worst-case distribution is still a Gaussian distribution even though the ambiguity set $\mathcal{B}$ contains different types of distributions.

    \section{Computational framework for the DR-LQG}\label{Sec:computation}
    In search of methods for computing the optimal $u^\star$ and $\mathbb{P}^\star$, we start by showing that problem $(L)$ admits a Nash equilibrium. Then we devise a best response algorithm that provably converges to te set of saddle points of \eqref{eq:lower:2}. Note that from the structure of (U) and (L), we directly infer that $\mathbf{q}^\star = \mathbf{0}$ when the nominal distribution is zero-mean. To simplify the results presentation, we will drop $\mathbf{q}$ in the remainder. All our results can be easily extended to the case of non-zero mean distributions, e.g., $\mathbf{q}^\star \neq \mathbf{0}$ with minor modifications.

    \subsection{Existence of a Nash equilibrium}
    Consider problem \eqref{eq:lower:2} and its dual 
    \begin{equation}\label{eq:dual}
        \begin{aligned}
       (D) := \quad &  \max _{\mathbb{P} \in \mathcal{G}} \min _{\bx, \bu} \mathbb{E}_{\mathbb{P}}\left[\bu^{\top} \bR \bu+\bx^{\top} \bQ \bx\right] \\
       & \text { s.t. } \:  \bu \in \mathcal{U}_{\boldeta}^{\text{lin}}, \quad \bx=\bH \bu+\bG \bw.
       \end{aligned}
       \end{equation}
    
       The next result shows that strong duality among the two problems holds.

       \begin{proposition}\label{strong:duality}
       Problem \eqref{eq:dual} is a strong dual for \eqref{eq:lower:2}.
       \end{proposition}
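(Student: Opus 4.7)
The plan is to reduce both $(L)$ and $(D)$ to a common finite-dimensional saddle-point game and then invoke Sion's minimax theorem. Using the reformulations developed in Section~IV-C and the fact that $\bq^\star=\mathbf{0}$ in the zero-mean case, both $(L)$ and $(D)$ can be phrased as problems in the triple $(\bU,\bW,\bV)$ with the common objective
\[
J(\bU,\bW,\bV)=\text{Tr}(\bF_1(\bU)\bW)+\text{Tr}(\bF_2(\bU)\bV).
\]
Here $\bU$ ranges over the closed convex subspace of block lower-triangular gain matrices, while $(\bW,\bV)$ ranges over the block-diagonal, positive semidefinite set carved out by the Gaussian KL constraints, as written after \eqref{eq:lower:2}.

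First, I would verify the convex-concave saddle structure. For any fixed $(\bW,\bV)\succeq 0$, expanding $\bF_1(\bU)$ and $\bF_2(\bU)$ yields sums of terms of the form $\text{Tr}(\bR\bU\bD\bW\bD^\top\bU^\top)$ and $\text{Tr}(\bQ(\bH\bU\bD+\bG)\bW(\bH\bU\bD+\bG)^\top)$. Each such term is a convex quadratic in $\bU$ since $\bR,\bQ\succeq 0$ and $\bW,\bV\succeq 0$, so $J(\cdot,\bW,\bV)$ is continuous and convex. For any fixed $\bU$, $J(\bU,\cdot,\cdot)$ is linear in $(\bW,\bV)$, hence continuous and concave.

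Second, I would establish compactness of the inner feasible set. Each constraint $\text{Tr}(\hat{W}_t^{-1}W_t)-\log\det W_t\le 2\rho_{w_t}+n-\log\det\hat{W}_t$ cuts out a convex subset of $\mathbb{S}^n_{++}$, using convexity of $-\log\det$ on the PD cone. Compactness follows because the trace term bounds the largest eigenvalue of $W_t$ from above, while the $-\log\det$ term prevents degeneracy by bounding the smallest eigenvalue away from zero; the analogous argument applies to each $V_t$. Thus the joint feasible set for $(\bW,\bV)$ is convex and compact.

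Finally, since $J$ is continuous, convex in $\bU$ on a convex domain and concave in $(\bW,\bV)$ on a convex compact domain, Sion's minimax theorem yields
\[
\inf_{\bU}\sup_{\bW,\bV}J(\bU,\bW,\bV)=\sup_{\bW,\bV}\inf_{\bU}J(\bU,\bW,\bV),
\]
which is precisely the strong duality between $(L)$ and $(D)$. The main obstacle will be the compactness argument: one must exploit \emph{both} sides of the KL constraint simultaneously, using the trace term to prevent blow-up of the covariances and the $-\log\det$ term to prevent collapse onto the boundary of $\mathbb{S}^n_+$. Once that is in place, the linear-subspace (convex but noncompact) structure of the outer $\bU$-domain is sufficient for Sion's theorem, which only requires one of the two domains to be compact.
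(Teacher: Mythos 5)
Your proposal is correct and follows essentially the same route as the paper: verify that the payoff is convex in $\bU$ and linear (hence concave) in $(\bW,\bV)$, establish convexity and compactness of the covariance feasible set cut out by the Gaussian KL constraints (which the paper isolates as Lemma~\ref{G:nice}), and invoke Sion's minimax theorem. Your inline compactness argument is in fact slightly more careful than the paper's, since you note explicitly that the $-\log\det$ term also keeps the smallest eigenvalue bounded away from zero, which is what guarantees closedness inside the open positive-definite cone.
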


As a consequence of Proposition \ref{strong:duality}, \eqref{eq:linear:policy} 
admits a saddle point $(\bu^\star, \mathbb{P}^\star)$. 


\subsection{Regularized best response scheme}

Let $\bSigma = \text{diag}(\bW, \bV)$ and $\bF(\bU) = \text{diag}(\bF_1(\bU), \bF_2(\bU))$, where we stress the dependence of $F$ from $\bU$. Further, let $\mathcal{J}(\bU,\bSigma) := \text{Tr}\left( \bF(\bU) \bSigma\right)$ be the "payoff" function of the zero-sum game described by \eqref{eq:lower:2} and set
\begin{equation}
    \begin{aligned}
\mathcal{A}(\bU) & = \max_{\Sigma \in \mathcal{G}} \mathcal{J}(\bU,\bSigma),\\
\mathcal{C}(\bSigma) & = \min_{\bU} \mathcal{J}(\bU,\bSigma).
    \end{aligned}
\end{equation}
Since $\mathcal{J}$ is convex in $\bU$ and concave in $\bSigma$, by Berge's Maximum theorem $\mathcal{A}$ is convex and continuous in $\bU$ and $\mathcal{C}$ is concave and continuous in $\bSigma$. Additionally, one has $\mathcal{C}(\bSigma) \leq \mathcal{J}(\bU,\bSigma) \leq \mathcal{A}(\bU)$, hence
$$
\begin{aligned}
\underline{\mathcal{V}} := & \max_{\bSigma \in \mathcal{G}} \mathcal{C}(\bSigma) = \max_{\bSigma \in \mathcal{G}} \min_{\bU} \mathcal{J}(\bU,\bSigma)  \\
& \leq \min_{\bU} \max_{\bSigma \in \mathcal{G}} \mathcal{J}(\bU) = \min_{\bU} \mathcal{A}(\bU) =: \overline{\mathcal{V}}.
\end{aligned}
$$
Let $$\Phi(\bU,\bSigma) := \mathcal{A}(\bU) -\mathcal{C}(\bSigma) $$
be the residual duality gap. Then: $\Phi: \mathbb{R}^{mT \times pT} \times \mathcal{G}\rightarrow \mathbb{R}$ is continuous, jointly convex and nonnegative, and its minimum $\overline{\mathcal{V}} - \underline{\mathcal{V}} \geq 0$ is reached in $\mathbb{R}^{mT \times pT} \times \mathcal{G}$. If the minimum is 0, then the zero-sum game is said to have a value $\mathcal{V} = \overline{\mathcal{V}} = \underline{\mathcal{V}}$.  More precisely, $\Phi(\bU,\bSigma)  = 0$ if and only if $\mathcal{A}(\bU) = \mathcal{C}(\bSigma)$, or equivalently, $(\bU,\bSigma)$ is a saddle point of the game.

Let us introduce the best response (BR) correspondences
\begin{subequations}\label{eq:BR}
    \begin{align}\label{BR1}
        \text{BR}_1(\bSigma) & \in \text{argmin}_{\bU} \:  \mathcal{J}(\bU,\bSigma)\\
        \text{BR}_2(\bU) & \in \text{argmax}_{\bSigma \in \mathcal{G}} \: \mathcal{J}(\bU,\bSigma).
    \end{align}
\end{subequations}

Before proceeding, we show that \eqref{BR1} is equivalently obtained by restricting $\bU$ to live in a compact set ${\mathcal{S}}\subseteq \mathbb{R}^{mT \times pT}$.
\begin{lemma}\label{set:S:compact}
There exists a compact and convex set ${\mathcal{S}}\subseteq \mathbb{R}^{mT \times pT}$ independent of $\bSigma$ such that $\text{BR}_1(\bSigma) = \text{argmin}_{\bU \in {\mathcal{S}}} \:  \mathcal{J}(\bU,\bSigma)$ for all $\bSigma \in \mathcal{G}$.
\end{lemma}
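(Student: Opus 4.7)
The plan is to show that the payoff $\mathcal{J}(\cdot,\bSigma)$ is coercive in $\bU$, uniformly over $\bSigma \in \mathcal{G}$. Then every minimizer lies in a common closed ball, which provides the required $\mathcal{S}$. The first ingredient is a uniform spectral bound on the admissible covariances. For any $V_t \in \mathcal{G}_{v_t}$, diagonalizing $\hat V_t^{-1/2} V_t \hat V_t^{-1/2}$ reduces the KL constraint to $\tfrac{1}{2}\sum_i (\lambda_i - 1 - \log \lambda_i) \le \rho_{v_t}$; since $x - \log x$ is coercive both at $0^+$ and at $\infty$, the eigenvalues of $V_t$ are confined to an interval $[\epsilon_V, C_V]$ with $\epsilon_V, C_V > 0$ depending only on $\hat V_t$ and $\rho_{v_t}$. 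The same estimate applied to $\mathcal{G}_{w_t}$ and $\mathcal{G}_{x_0}$ yields $\epsilon_W I \preceq W_t \preceq C_W I$ uniformly; in particular $\bV \succeq \epsilon_V I$ and $\|\bW\|_F \le \bar C$ on all of $\mathcal{G}$.

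With these bounds in hand, I would expand the payoff using $\bM = \bR + \bH^\top \bQ \bH \succ 0$ (recall $\bR \succ 0$),
\begin{equation*}
\mathcal{J}(\bU,\bSigma) = \text{Tr}(\bU^\top \bM \bU \bV) + \text{Tr}(\bD^\top \bU^\top \bM \bU \bD \bW) + 2\text{Tr}(\bG^\top \bQ \bH \bU \bD \bW) + \text{Tr}(\bG^\top \bQ \bG \bW),
\end{equation*}
discard the nonnegative second term, use $\bV \succeq \epsilon_V I$ on the first, and apply Cauchy--Schwarz to the cross term to get
\begin{equation*}
\mathcal{J}(\bU,\bSigma) \ge \lambda_{\min}(\bM)\,\epsilon_V\,\|\bU\|_F^2 - c_1 \|\bU\|_F + c_2,
\end{equation*}
with constants $c_1, c_2 \ge 0$ depending only on $\bG, \bQ, \bH, \bD$ and $\bar C$ — in particular, not on $\bSigma$. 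Comparing with the uniform upper bound $\mathcal{J}(\mathbf 0,\bSigma) \le \|\bG^\top \bQ \bG\|\,\text{Tr}(\bW) \le c_0$ produces a radius $R > 0$, depending only on problem data and the KL radii, such that $\mathcal{J}(\bU,\bSigma) > \mathcal{J}(\mathbf 0,\bSigma)$ whenever $\|\bU\|_F > R$, uniformly over $\bSigma \in \mathcal{G}$.

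It then suffices to set $\mathcal{S} := \{\bU \in \mathbb{R}^{mT \times pT} : \|\bU\|_F \le R\}$, intersected with the block-lower-triangular causality subspace if that constraint is imposed on $\bU$. This set is compact, convex, and independent of $\bSigma$. The coercivity estimate guarantees $\text{BR}_1(\bSigma) \subseteq \mathcal{S}$, and since $\mathcal{J}(\cdot,\bSigma)$ is convex with $\mathcal{S}$ containing the unconstrained optimum, we conclude $\text{BR}_1(\bSigma) = \argmin_{\bU \in \mathcal{S}} \mathcal{J}(\bU,\bSigma)$. The critical technical point is the uniform lower spectral bound $\bV \succeq \epsilon_V I$: without it the leading coefficient in the coercivity estimate would degenerate and the radius $R$ could blow up as $\bV$ approaches the boundary of $\mathcal{G}$. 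Everything else reduces to a standard coercivity–plus–convexity argument.
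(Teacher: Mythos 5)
Your proof is correct and follows essentially the same route as the paper's: establish that $\mathcal{J}(\cdot,\bSigma)$ is coercive in $\bU$ uniformly over $\bSigma\in\mathcal{G}$ (the key being a uniform positive lower bound on the smallest eigenvalue of the admissible covariances, so the quadratic term $\text{Tr}(\bU^\top\bM\bU\bV)\geq\lambda_{\min}(\bM)\,\epsilon_V\|\bU\|_F^2$ does not degenerate), then take $\mathcal{S}$ to be a ball of uniform radius containing all minimizers. The only cosmetic differences are that you derive the spectral confinement $\epsilon_V I\preceq V_t\preceq C_V I$ directly from the eigenvalue form of the KL constraint rather than invoking the compactness of $\mathcal{G}$ (Lemma~\ref{G:nice}), and you calibrate the radius against $\mathcal{J}(\mathbf{0},\bSigma)$ rather than against $\max_{\bSigma}\min_{\bU}\mathcal{J}$; both choices are sound.
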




Since $\mathcal{J}(\bU,\bSigma)$ is continuous in both arguments, Maximum theorem implies that $\text{BR}_1, \text{BR}_2$ are upper semi-continuous correspondences from $\mathcal{G}$ to $\mathcal{S}$ and from $\mathcal{S}$ to $\mathcal{G}$ respectively, with nonempty closed convex values.
Consider the continuous-time best response dynamics on $\mathcal{S} \times \mathcal{G}$:
\begin{subequations}\label{eq:BR:continuous}
\begin{align}
\dot{\bU}(\lambda) & \in \text{BR}_2(\bSigma(\lambda)) - \bU(\lambda)\\
\dot{\bSigma}(\lambda) & \in \text{BR}_1(\bU(\lambda)) - \bSigma(\lambda).
\end{align}
\end{subequations}
We can interpret \eqref{eq:BR:continuous} as a regularized best response scheme, where the regularization, given by the negative of the best response of the corresponding player, is added to dampen oscillations and promote convergence. Denote $v(\lambda) = \Phi(\bU(\lambda),\bSigma(\lambda))$.  We have the following result.

\begin{theorem}\label{thr:convergence:CT} The following facts holds:
\begin{enumerate}
\item [i.] \eqref{eq:BR:continuous} has a solution for every initial condition $(\bU(0),\bSigma(0)) \in \mathcal{S} \times \mathcal{G}$.
\item [ii.] Along every solution of \eqref{eq:BR:continuous}, $v(\lambda)$ is a Lyapunov function with
$
 \dot{v}(\lambda) \leq - v(\lambda) \quad \text{for almost all } \lambda,
$
hence
\begin{equation}
v(\lambda) \leq  e^{-\lambda} v(0).
\end{equation}
Thus, the zero-game has a value and every solution of \eqref{eq:BR:continuous} converges to the non-empty set of saddle points, which is a uniform global attractor for \eqref{eq:BR:continuous}.
\end{enumerate}
\end{theorem}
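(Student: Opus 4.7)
The plan is to handle (i) via classical differential-inclusion theory and (ii) via an envelope-theorem-based Lyapunov argument exploiting the convex--concave structure of $\mathcal{J}$. Both steps take advantage of the fact that the trajectory can be confined to the compact convex set $\mathcal{S} \times \mathcal{G}$, on which the best-response correspondences are upper semi-continuous with nonempty compact convex values, as noted just before the theorem.

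For part (i), I would interpret \eqref{eq:BR:continuous} as the differential inclusion $(\dot{\bU},\dot{\bSigma}) \in (\text{BR}_1(\bSigma),\text{BR}_2(\bU)) - (\bU,\bSigma)$. Since the right-hand side is upper semi-continuous with nonempty compact convex values, a standard existence theorem for differential inclusions (e.g., Aubin--Cellina) produces absolutely continuous solutions locally in time. To extend them to $[0,\infty)$ I would verify forward invariance of $\mathcal{S}\times \mathcal{G}$ via the variation-of-constants identity
\begin{equation*}
\bU(\lambda) = e^{-\lambda}\bU(0) + \int_0^\lambda e^{-(\lambda-s)}\,\tilde{\bU}(s)\,ds, \qquad \tilde{\bU}(s) \in \text{BR}_1(\bSigma(s)) \subseteq \mathcal{S},
\end{equation*}
which exhibits $\bU(\lambda)$ as a convex combination of elements of the convex set $\mathcal{S}$; the analogous identity for $\bSigma(\lambda)$ keeps it in $\mathcal{G}$ because the KL constraints $\operatorname{Tr}(\hat{W}_t^{-1} W_t) - \log \det W_t \leq \text{const}$ (and likewise for $V_t$) are convex in the covariance parameters.

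For part (ii), the core step is the pointwise inequality $\dot v(\lambda) \leq -v(\lambda)$ almost everywhere, after which Grönwall's inequality delivers $v(\lambda) \leq e^{-\lambda}v(0)$. Fix measurable selections $\tilde{\bU}(\lambda) \in \text{BR}_1(\bSigma(\lambda))$ and $\tilde{\bSigma}(\lambda) \in \text{BR}_2(\bU(\lambda))$ driving the flow. Because $\mathcal{J}$ is convex in $\bU$ (for fixed $\bSigma$) and linear in $\bSigma$ (for fixed $\bU$), Danskin's theorem provides the subgradient inclusions
\begin{equation*}
\nabla_\bU \mathcal{J}(\bU(\lambda),\tilde{\bSigma}(\lambda)) \in \partial \mathcal{A}(\bU(\lambda)), \qquad \nabla_\bSigma \mathcal{J}(\tilde{\bU}(\lambda),\bSigma(\lambda)) \in \partial^+ \mathcal{C}(\bSigma(\lambda)).
\end{equation*}
Combining the chain rule for convex/concave functions along absolutely continuous curves with the subgradient inequality applied to $\mathcal{J}(\cdot,\tilde{\bSigma})$ and $\mathcal{J}(\tilde{\bU},\cdot)$ then yields
\begin{equation*}
\dot v(\lambda) \leq \bigl[\mathcal{J}(\tilde{\bU},\tilde{\bSigma}) - \mathcal{A}(\bU)\bigr] + \bigl[\mathcal{C}(\bSigma) - \mathcal{J}(\tilde{\bU},\tilde{\bSigma})\bigr] = -v(\lambda).
\end{equation*}
Convergence of every trajectory to the (non-empty, by Proposition \ref{strong:duality}) set of saddle points $\{v=0\}$ then follows from $v(\lambda)\to 0$, and uniform global attractivity is immediate from the exponential rate, whose constant depends only on $\sup_{\mathcal{S}\times \mathcal{G}} v$.

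The most delicate step is the chain rule in the non-smooth setting: $\mathcal{A}$ and $\mathcal{C}$ need not be differentiable everywhere, and the best-response maps may be set-valued, so I must produce compatible measurable selections $\tilde{\bU}(\cdot),\tilde{\bSigma}(\cdot)$ along the trajectory. I would handle this by combining almost-everywhere differentiability of convex functions (Rademacher) with the Kuratowski--Ryll-Nardzewski selection theorem. An alternative that bypasses pointwise differentiability is to bound the finite differences $v(\lambda+h)-v(\lambda)$ directly using convexity of $\mathcal{J}(\cdot,\tilde{\bSigma})$ and linearity in $\bSigma$, and pass to $\limsup_{h\downarrow 0}$ to obtain the same differential inequality as a Dini derivative, which is still sufficient for Grönwall.
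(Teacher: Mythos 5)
Your proposal is correct and follows essentially the same route as the paper: the paper simply invokes \cite[Theorem~1]{hofbauer2006best} after checking that $\mathcal{J}$ is a continuous convex--concave saddle function on the compact convex product $\mathcal{S}\times\mathcal{G}$, and your argument (Aubin--Cellina existence plus forward invariance via variation of constants, then the Danskin/envelope inequality giving $\dot v \leq -v$ and Gr\"onwall) is precisely the proof of that cited result, written out in full. The only discrepancy to note is that you silently correct the index swap in \eqref{eq:BR:continuous} (the paper writes $\dot{\bU}\in \mathrm{BR}_2(\bSigma)-\bU$, but $\mathrm{BR}_2$ returns a covariance, so the intended dynamics are $\dot{\bU}\in \mathrm{BR}_1(\bSigma)-\bU$ and $\dot{\bSigma}\in \mathrm{BR}_2(\bU)-\bSigma$, as you use).
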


Theorem \ref{thr:convergence:CT} establishes that the (regularized) best response scheme in \eqref{eq:BR:continuous} converges at an exponential rate to the set of saddle points of the DR-LQG problem. 

Moreover, we notice that both best responses admits a computationally cheap solution, making our scheme attractive from a computational point of view. Namely, the best response of the minimizing player can be solved via a closed-form dynamic programming recursion coupled with a Kalman filter, as in standard LQG. In turn, we show the best response of the maximizing player can be decomposed as a sum of $2T + 1$ independent convex optimization problems in a single scalar variable $\tau_i \in \mathbb{R}_+$ each of the form
\begin{equation}\label{eq:max:subproblem}
    \begin{aligned}
\bSigma_i = \:  & \text{argmax}_{\bSigma_i \succ 0} \quad\text{Tr}\left(\bF_{i}(\bU) \bSigma_i\right) \\
& \text{s.t.} \quad \frac{1}{2}  \left( \text{Tr}(\hat{\bSigma}_i^{-1} \bSigma_i) - n + \log \frac{\det \hat{\bSigma}_i}{\det \bSigma_i} \right) \leq \rho_i,
    \end{aligned}
\end{equation}
where $\bF_{i}$ is the $i$-th digoanl block of $\bF(\bU)$ and $\hat{\bSigma}_i$ the $i$-th diagonal block of $\hat{\bSigma} = \text{diag}(\hat{W}_{0}, \ldots, \hat{W}_T, \hat{V}_0, \ldots, \hat{V}_{T-1})$.

We show that \eqref{eq:max:subproblem} admits a closed-form solution. Exploiting Lemma \ref{lemma:opt:variance} since each $\bF_{i}(\bU) \in \mathbb{S}_{+}^n$ for any $\bU$, the maximizer of \eqref{eq:max:subproblem} is given by $$\bSigma_i^\star(\tau_i^\star) =\textstyle\left(\hat{\bSigma}_i^{-1} - \frac{2 \bF_{i}(\bU)}{\tau_i^\star} \right)^{-1},$$
where by Lagrange duality $\tau_i^\star$ is chosen such that complementarity slackness holds, i.e., \begin{equation}\label{bisection}
\frac{1}{2} \left( \text{Tr}(\hat{\bSigma}_i^{-1} \bSigma_i^\star(\tau_i)) - n + \log \frac{\det \hat{\bSigma}_i}{\det \bSigma_i^\star(\tau_i)} \right) = \rho_i.
\end{equation}
 Note that \eqref{bisection} represents an algebraic equation in the scalar variable $\tau_i \in \mathbb{R}_+$. In practice, it can be efficiently solved by bisection to any desired tolerance.

    \section{DR-LQG with endogenous ambiguity sets}\label{sec:endo}
    The problem formulation considered up to here assumes exogenous disturbances resulting in ambiguity sets that are defined a priori, i.e., before the control task, and are not influenced by the controller's actions. In this section, we extend the results to the case where the ambiguity sets are endogenously determined by the controller's actions.

    Consider the standard Linear Fractional Transformation (LFT) model in Fig.~\ref{fig:LFT}: the source of uncertainty in the system dynamics, represented by the uncertainty operator $\Delta$ that is assumed to be unknown but of bounded magnitude, might depend on the system states and inputs. Consequently, to adequately represent the distributional ambiguity affecting the system, we shall resort to a state and decision-dependent ambiguity set of the form
    \begin{equation} \label{eq:relative:entropy:constraint_gauss:extension}
      {\luci \mathcal{B}_t(x_t, u_t) }= \{ \mathbb{P} \in \mathcal{P}_\mathcal{G}(\mathbb{R}^n) \: : \: R(\mathbb{P}|| \mathbb{Q}_t) \leq \rho_t + \frac{1}{2} \Vert z_t \Vert^2 \}.
    \end{equation}
    for a nominal Gaussian distribution $\mathbb{Q}_t$, with $ z_t = E_1x_t + E_2 u_t$ where $E_1 \in \R^{q \times n}$ and $E_2 \in \R^{q \times m}.$ For simplicity, we assume that $E_1^\top E_2 = 0.$ 
    \begin{figure} 
    \centering
            \tikzset{every picture/.style={line width=0.75pt}} 
        \begin{tikzpicture}[x=0.75pt,y=0.75pt,yscale=-0.8,xscale=0.8] 
            
            \draw   (293.72,51) -- (371.39,51) -- (371.39,105.21) -- (293.72,105.21) -- cycle ;
            \draw   (293.72,160.79) -- (371.39,160.79) -- (371.39,215) -- (293.72,215) -- cycle ;
            \draw    (167,201) -- (279,200.88) -- (291,200.98) ;
            \draw [shift={(294,201)}, rotate = 180.44] [fill={rgb, 255:red, 0; green, 0; blue, 0 }  ][line width=0.08]  [draw opacity=0] (8.93,-4.29) -- (0,0) -- (8.93,4.29) -- cycle    ;
            \draw    (371.39,200.09) -- (467,200) ;
            \draw [shift={(470,200)}, rotate = 179.95] [fill={rgb, 255:red, 0; green, 0; blue, 0 }  ][line width=0.08]  [draw opacity=0] (8.93,-4.29) -- (0,0) -- (8.93,4.29) -- cycle    ;
            \draw    (371.39,174.34) -- (436.86,174.34) ;
            \draw    (436.64,87.95) -- (436.75,174.69) ;
            \draw    (436.64,87.95) -- (374.39,88.9) ;
            \draw [shift={(371.39,88.95)}, rotate = 359.12] [fill={rgb, 255:red, 0; green, 0; blue, 0 }  ][line width=0.08]  [draw opacity=0] (8.93,-4.29) -- (0,0) -- (8.93,4.29) -- cycle    ;
            \draw   (216.04,176.37) .. controls (216.04,168.51) and (221.51,162.14) .. (228.25,162.14) .. controls (234.99,162.14) and (240.46,168.51) .. (240.46,176.37) .. controls (240.46,184.23) and (234.99,190.6) .. (228.25,190.6) .. controls (221.51,190.6) and (216.04,184.23) .. (216.04,176.37) -- cycle ;
            \draw    (165,176.37) -- (213.04,176.37) ;
            \draw [shift={(216.04,176.37)}, rotate = 180] [fill={rgb, 255:red, 0; green, 0; blue, 0 }  ][line width=0.08]  [draw opacity=0] (8.93,-4.29) -- (0,0) -- (8.93,4.29) -- cycle    ;
            \draw    (293.61,90.31) -- (228.25,90.31) ;
            \draw    (228.25,90.31) -- (228.25,159.14) ;
            \draw [shift={(228.25,162.14)}, rotate = 270] [fill={rgb, 255:red, 0; green, 0; blue, 0 }  ][line width=0.08]  [draw opacity=0] (8.93,-4.29) -- (0,0) -- (8.93,4.29) -- cycle    ;
            \draw    (240.46,176.37) -- (290.72,177.01) ;
            \draw [shift={(293.72,177.05)}, rotate = 180.73] [fill={rgb, 255:red, 0; green, 0; blue, 0 }  ][line width=0.08]  [draw opacity=0] (8.93,-4.29) -- (0,0) -- (8.93,4.29) -- cycle    ;
            
            \draw (302 ,172) node [anchor=north west][inner sep=0.75pt]   [align=center] {Nominal\\System};
            \draw (320 , 70) node [anchor=north west][inner sep=0.75pt]  [font=\Large]  {$\Delta $};
            \draw (255,  205) node [anchor=north west][inner sep=0.75pt]    {$u_{t}$};
            \draw (438,  205) node [anchor=north west][inner sep=0.75pt]    {$x_{t}$};
            \draw (400, 73) node [anchor=north west][inner sep=0.75pt]    {$ z_{t}$};
            \draw (237, 150) node [anchor=north west][inner sep=0.75pt]    [align=center] {${w}_{t} \!\sim \! \mathbb{P}_t$};
            \draw (162, 144) node [anchor=north west][inner sep=0.75pt]    [align=center] {$w_{t} \!\sim \!\hat{\mathbb{P}}_t$};		
        \end{tikzpicture}
         \caption{LFT uncertain model. It separates the nominal dynamics from the uncertainty, represented by the operator $\Delta$ in a feedback interconnection. By suitably selecting the admissible structure and magnitude of $\Delta$, the LFT model can represent various sources of uncertainty.} \label{fig:LFT}
    \end{figure}
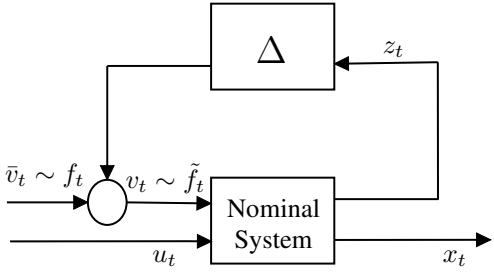
    
    While the theoretical analysis of Section IV carries over, the convergence of the best response dynamics in \eqref{eq:BR:continuous} is no longer guaranteed, since the endogenous disturbance introduces a coupling among the feasible sets of the players. Thus, Theorem \ref{thr:convergence:CT} does no longer necessarily hold. 
    
    To overcome this issue, we devise a different approximated scheme that retain convergence guarantees despite the additional complexity introduced by the endogenous ambiguity sets. Differently from the previous sections, this approximated scheme is grounded on standard tools from dynamic programming. In this sense, the approach used here is therefore  more closely related to \cite{petersen2000minimax,hakobyan2022wasserstein,hakobyan2024wasserstein,kim2023distributional,falconi2025distributionally}.

    \subsection{Relaxed problem}\label{sec:approx}
    To simplify the derivation, in this section we only consider distributional ambiguity on the process noise $w_t$, while $\mathbb{P}_{x_0}, \mathbb{P}_{v_t}$ are assumed to be known and equal to the their nominal distributions, i.e., we set $\rho_{x_0} = \rho_{v_t} = 0, \: \forall \: t$. Let us denote the information collected up to time $t$ as 
    $$I_t = (y_0, \ldots, y_{t}, u_0, \ldots, u_{t-1}),$$
    with $I_0 = y_0$, and we call it the information vector. When a new control action is computed, the information vector is updated as
    $
    I_{t+1} = (I_t, y_{t+1}, u_t).
    $ It is well-known in stochastic optimal control theory that $I_t$ serves as sufficient statistics. Thus, the policies of the two players can be writtem as $u_t = \pi_t(I_t)$ and, similarly, $\mathbb{P}_{w_t} = \gamma_t(I_t),$ where $\gamma_t$ is a measurable function that maps the information vector $I_t$ to a distribution $\mathbb{P}_{w_t}$ in $\mathcal{B}_t(x_t, u_t)$.

    Let 
    $ {\Sigma}_t \!:=\! 
    \mathbb{E}_{\mathbb{P}_{w_t}} \left[
    \left(x_t \!-\! \mathbb{E}_{\mathbb{P}_{w_t}} \![x_t | I_t ] 
    \right)^\top\!
    (x_t \!-\! \mathbb{E}_{\mathbb{P}_{w_t}} \![x_t | I_t ] ) \; \big | \;  I_t 
    \right] 
    $
    be the prediction error covariance matrix of the state $x_t$ assuming that  system \eqref{eq:nominal:system} evolves according to the distribution $\mathbb{P}_{w_t}$ selected by the adversary. Similarly, we denote $\hat{\Sigma}_t$ the corresponding nominal quantity, i.e., computed with respect to the nominal distribution $\hat{\mathbb{P}}_{w_t}$. Consequently the DR-LQG problem with endogenous ambiguity sets can be formulated as
    \begin{equation}\label{eq:problem:DP}
        \inf_{\pi} \sup_{\gamma \in \Gamma}\textstyle \mathbb{E}_{\mathbb{P}_{w}} \left[\sum_{t=0}^{T-1}\left(x_t^{\top} Q x_t+u_t^{\top} R u_t\right)+x_T^{\top} Q_T x_T\right],
    \end{equation}
    where 
$ \Gamma \coloneqq \{\gamma = (\gamma_0, \ldots, \gamma_N) \: | \: \gamma_t : \R^{(p+m)t} \to \mathcal{P}_{\mathcal{G}}(\R^n),  \gamma_t (I_t)  \in {\mathcal{B}_t(x_t,u_t)} \}$. 

We begin by focusing on a relaxed version of Problem \eqref{eq:problem:DP} where, instead of constraining the adversary to select a distribution from the ambiguity sets $\luci \{ \mathcal{B}_t(x_t, u_t) \}_{t =0}^{T-1} $, we simply penalize the deviation of the distributions $\mathbb{P}_{w_t}$ from their nominal ones. In other words, we focus on the \textit{relaxed regularized} problem. Let us use for brevity $R(\mathbb{P}|| \mathbb{Q}_t) \equiv \mathcal{R}_t$. The relaxed problem reads

    \begin{equation}\label{eq:problem:relaxed}
    \inf_{\pi}\sup_{\gamma \in {\Gamma}} J^\tau(\pi, \gamma)
    \end{equation}
    where $J^\tau(\pi, \gamma) $ is defined as
    $$
    \begin{aligned}
     {\mathbb{E}_{\mathbb{P}_w}}\textstyle\left[ \sum_{t=0}^{T-1} \left(\frac{1}{2} (x_t^\top Q x_t + u_t^\top R u_t) - \tau_t \mathcal{R}_t \right) + \frac{1}{2}x_{T}^\top Q_{T} x_{T}\right],
    \end{aligned}
    $$
    for fixed $\{\tau_t\}_{t = 0}^N$ with $\tau_t \in \mathbb{R}_+$. 
    We address \eqref{eq:problem:relaxed}
    by  resorting to the dynamic programming technique. To this end, we derive the following instrumental result.
    
    
    \begin{proposition}\label{propo:recursion}
    Let  
    $\tilde{x}_t := {\mathbb{E}_{\mathbb{P}_{w_t}}}[x_t|I_{t}], $ 
    $ \xi_{t} := x_{t} - \tilde{x}_t $  and 
    ${\Sigma}_t := \mathbb{E}_{\mathbb{P}_{w_t}}[\xi_{t} \xi_{t}^\top|I_{t}]$  for $t \in \mathbb{N}.$ 
    Given $P_{t+1} \in \mathbb{S}^n_{++}$, $S_{t+1} \in \mathbb{S}_n$, $z_{t+1} \in \R$, define the function 
    $ r_{t} : \mathbb{S}_{++}^n \to \mathbb{R} $ as
    \begin{equation} \label{eq:r_rec}
    r_t (X) \! =  \! \frac{1}{2}\emph{Tr} (S_{t+1}
    (A X A^\top  \!-\! A X C^\top (C X C^\top \! + \!  \hat{V}_t)^{-1}C X A ^\top )) . 
    \end{equation} 
    Moreover, let $H_{t+1}(I_{t+1})$ be defined as
    $$
    \mathbb{E}_{\mathbb{P}_{w_t}}\textstyle \!\left[\frac{1}{2}(x_{t+1}^\top P_{t+1}x_{t+1} + \xi_{t+1}^\top S_{t+1}\xi_{t+1}) | I_{t+1} \right] \!+\! z_{t+1}. $$
    Consider the optimization problem
    \begin{equation}\label{eq:opt:value:function}
    \begin{aligned}
    H_t(I_{t}) = \inf_{u_t} \sup_{{\mu}_t, {W}_t\succ 0} &
  {\mathbb{E}_{\mathbb{P}_{w_t}}}\left[ \frac{1}{2}(x_{t}^\top Q x_t + u_t^\top R u_t) + \right. \\
  & \left. H_{t+1}(I_{t+1}) - \tau_t \mathcal{R}_t | I_{t}, u_t \right]
    \end{aligned}
    \end{equation}
    where the penalty parameter $\tau_t \geq 0$ is fixed and satisfies 
    \begin{align}
        \hat{W}_t^{-1 } - \frac{P_{t+1}}{\tau_t}\succ 0, \quad \hat{W}_t^{-1 } - \frac{P_{t+1} + S_{t+1}}{\tau_t}\succ 0.
        \label{eq:tau:conditionPS} 
    \end{align}
    Then:
    \begin{enumerate}
    \item[i. ] The optimal solutions of \eqref{eq:opt:value:function} are $ u_t^o = K^o_t \tilde{x}_t$ with 
    \begin{equation} \label{eq:drc_policy}
         K^o_t = -R^{-1}B^\top \textstyle \left(P_{t+1}^{-1} + BR^{-1}B - \frac{\hat{W}_t}{\tau_t}\right)^{-1}A,
    \end{equation}
     \begin{align}
    {\mu}_t^o & =\textstyle \left[ \left(I - \frac{P_{t+1}\hat{W}_t}{\tau_t}\right)^{-1} - I\right](A\tilde{x}_t + Bu_t^o), \label{eq:wc_mu}\\
    {W}_t^o & = \textstyle\left( \hat{W}_t^{-1} - \frac{P_{t+1} + S_{t+1}}{\tau_t} \right)^{-1}. \label{eq:wc_cov}
    \end{align}
    \item[ii. ]  The function $H_t(I_{t})$ in \eqref{eq:opt:value:function} is given by 
    \begin{align*}
        H_t(I_{t}) = {\mathbb{E}}_{\mathbb{P}_{x_{t}}}\textstyle\left[\frac{1}{2}(x_{t}^\top P_{t}x_{t} \!+\! \xi_{t}^\top S_{t}\xi_{t}) | I_{t}\right] \!+\! z_{t} \!+ \!r_t (\Sigma_t)
    \end{align*}
    where
    \begin{align}
    P_t & = Q + A^\top \textstyle\left( P_{t+1}^{-1} + BR^{-1}B^\top - \frac{\hat{W}_t}{\tau_t} \right)^{-1}A, 
    \label{eq:P_rec} \\
    S_t & = Q + A^\top P_{t+1}A - P_t, 
    \label{eq:S_rec} \\
    z_{t} &= z_{t+1} - \frac{\tau_t}{2}\textstyle \log \left| I - \frac{\hat{W}_t(P_{t+1} + S_{t+1})}{\tau_t}\right| 
    \label{eq:z_rec}
    \end{align}
    \end{enumerate}
    \end{proposition}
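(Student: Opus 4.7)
The plan is to establish the claim as a single backward Bellman step. I substitute the stated form of $H_{t+1}(I_{t+1})$ into \eqref{eq:opt:value:function}, evaluate the conditional expectation, perform the inner supremum over $(\mu_t, W_t)$ followed by the outer infimum over $u_t$, and match the resulting expression against the claimed form for $H_t(I_t)$ to read off $K_t^o, \mu_t^o, W_t^o$ and the recursions for $P_t, S_t, z_t$. Using Gaussianity and the orthogonal decomposition $x_{t+1} = \tilde{x}_{t+1} + \xi_{t+1}$, I would first rewrite
$$H_{t+1} = \tfrac{1}{2}\tilde{x}_{t+1}^\top P_{t+1}\tilde{x}_{t+1} + \tfrac{1}{2}\text{Tr}((P_{t+1}+S_{t+1})\Sigma_{t+1}) + z_{t+1}.$$
The Kalman filter at time $t+1$ expresses $\tilde{x}_{t+1}$ as an affine function of $(\tilde{x}_t, u_t, \mu_t)$ plus an innovation term, while $\Sigma_{t+1}$ is a deterministic function of $(\Sigma_t, W_t)$ via the standard predict--update recursion.

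\textbf{Inner sup.} Taking conditional expectations over $(w_t, v_{t+1})$ given $I_t$ and $u_t$, the expression in \eqref{eq:opt:value:function} reduces to a quadratic form in $(\tilde{x}_t, u_t, \mu_t)$ plus trace terms in $(\Sigma_t, W_t)$, minus the closed-form Gaussian KL \eqref{eq:tightness}. The resulting problem in $(\mu_t, W_t)$ decouples: the $\mu_t$-part is strictly concave under the first condition of \eqref{eq:tau:conditionPS} and stationarity yields \eqref{eq:wc_mu}; the $W_t$-part is strictly matrix-concave under the second condition and, using $\partial_W \log|W| = W^{-1}$, stationarity yields \eqref{eq:wc_cov}. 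Substituting $W_t^o$ back, the $W_t$-dependent log-determinant and trace contributions combine into $-\tfrac{\tau_t}{2}\log|I - \hat{W}_t(P_{t+1}+S_{t+1})/\tau_t|$, which is exactly the $z_t$-increment in \eqref{eq:z_rec}.

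\textbf{Outer inf and coefficient matching.} What remains is a strictly convex quadratic in $u_t$; stationarity together with the Woodbury identity, used to convert the ``primal'' inverse $(P_{t+1}^{-1} + BR^{-1}B^\top - \hat{W}_t/\tau_t)^{-1}$ into a dual Riccati form, gives $u_t^o = K_t^o \tilde{x}_t$ with $K_t^o$ as in \eqref{eq:drc_policy}. Substituting $u_t^o$, the optimized quadratic yields $\tfrac{1}{2}\tilde{x}_t^\top P_t \tilde{x}_t$ with $P_t$ as in \eqref{eq:P_rec}. To recover the prescribed form $\tfrac{1}{2}\mathbb{E}[x_t^\top P_t x_t + \xi_t^\top S_t \xi_t \mid I_t]$ I would use the identity $\mathbb{E}[x_t^\top P_t x_t \mid I_t] = \tilde{x}_t^\top P_t \tilde{x}_t + \text{Tr}(P_t \Sigma_t)$ and read off $S_t$ from the remaining $\Sigma_t$-dependent quadratic contributions of the running cost and the prediction step, obtaining \eqref{eq:S_rec}. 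The $\Sigma_t$-dependent terms generated by the Kalman measurement update at $t+1$ then collapse to precisely $r_t(\Sigma_t)$ in \eqref{eq:r_rec}.

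\textbf{Main obstacle.} The central difficulty is the two-level bookkeeping induced by $W_t$: it enters directly through $\text{Tr}(P_{t+1}W_t)$ and also indirectly through $\text{Tr}(S_{t+1}\Sigma_{t+1})$ via the prediction step, and only after combining these contributions into the effective operator $P_{t+1}+S_{t+1}$ does the $W_t$-subproblem become cleanly matrix-concave and yield the symmetric expression \eqref{eq:wc_cov}. In parallel, verifying that the PD conditions in \eqref{eq:tau:conditionPS} are exactly what make the inner concave maximizations well-posed and the Woodbury rearrangement producing the Riccati form \eqref{eq:P_rec} valid requires careful tracking of positive definiteness along each manipulation; this is expected to be the bulk of the technical work.
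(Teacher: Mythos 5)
Your proposal follows essentially the same route as the paper's proof: expand $H_{t+1}$ via the orthogonal decomposition and the filtering recursion $\Sigma_{t+1} = W_t + A\Sigma_t A^\top - A\Sigma_t C^\top(C\Sigma_t C^\top + \hat V_t)^{-1}C\Sigma_t A^\top$ so that $W_t$'s direct and indirect contributions combine into the effective operator $P_{t+1}+S_{t+1}$ (the paper's key step too, handled there via Lemma~\ref{lemma:opt:variance}), solve the decoupled concave subproblems in $\mu_t$ and $W_t$ by stationarity under \eqref{eq:tau:conditionPS}, and then apply Woodbury and coefficient matching against $\mathbb{E}[x_t^\top P_t x_t\mid I_t]=\tilde x_t^\top P_t\tilde x_t+\mathrm{Tr}(P_t\Sigma_t)$ to read off \eqref{eq:P_rec}--\eqref{eq:z_rec} and $r_t(\Sigma_t)$. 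The one detail worth making explicit when you write this out is the outer minimization over $u_t$: since $\mu_t^o$ depends on $u_t$, you should either invoke an envelope argument or, as the paper does, use the fixed-point identity $\mu_t^o=\tfrac{1}{\tau_t}\hat W_t P_{t+1}(A\tilde x_t+Bu_t+\mu_t^o)$ to show the $\partial\mu_t^o/\partial u_t$ terms cancel before annihilating the derivative.
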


    Our aim is to apply Proposition \ref{propo:recursion} to derive an iterative scheme to solve \eqref{eq:problem:DP}. To this end, let
\begin{equation}\label{eq:V_exact_t}
\begin{aligned}
\mathcal{V}_{t} (I_t) \textstyle
:= 
\underset{\pi}{\inf} \:
\underset{\gamma \in {\Gamma}}{\sup} \:
\mathbb{E}_{\mathbb{P}_{w}}
& \left[
\sum_{s=t}^N \left(\frac{1}{2} (x_s^\top Q x_s + u_s^\top R u_s) \!- \!\tau_s \mathcal{R}_s\! \right) \right.\\
 & \left. \left. \quad + \frac{1}{2}x_{N+1}^\top Q_{N+1} x_{N+1} \right | I_t, u_t \right]
\end{aligned}
\end{equation}
be the value function satisfying the recursion
\begin{equation} \label{eq:dp_recursion}
\begin{aligned}
\mathcal{V}_{t} (I_t) 
= \inf_{u_t} 
\sup_{\mathbb{P}\in \mathcal{P_G}(\mathbb{R}^n)} 
\mathbb{E}_{\mathbb{P}}  & \left[ 
\frac{1}{2} (x_t^\top Q x_t + u_t^\top R u_t) - \tau_t \mathcal{R}_t  \right.\\
 & \left. 
 +\mathcal{V}_{t+1}(I_{t+1})  | I_t, u_t \right]
\end{aligned}
\end{equation}
with $\mathcal{V}_{T}(I_{T}) = {\mathbb{E}}_{\mathbb{P}_{w_{T-1}}}[
\frac{1}{2} x_{T}^\top Q_T x_{T} | I_{T} ].$ 
By Bellman's principle of optimality \cite{bertsekas2012dynamic}, it holds
\begin{equation}\label{eq:DP:optimal}
\inf_{\pi}\sup_{\gamma \in \bar{\Gamma}} J^\tau(\pi, \gamma) =  \mathcal{V}_0(I_0).
\end{equation}

Unfortunately, a rapid inspection of \eqref{eq:dp_recursion} reveals that this value function is not amenabe to the closed-form recursion provided by Theorem \ref{propo:recursion}. This is due to the non-linearity introduced by $r_t(\Sigma_t)$ appearing in the recursion for $\mathcal{V}_t(I_t)$. Indeed, $r_t(\Sigma_t)$ is a non-linear function of $\Sigma_t$ which in turns depends on the decision variable $W_{t-1}$ throught the relation\footnote{This relation follows from established principles in filtering theory addressing the propagation of uncertainties in state estimation.}
    \begin{multline*}
        \Sigma_{t} =W_{t-1} + A  \Sigma_{t-1} A^\top - 
        \\ 
         A\Sigma_{t-1} C^\top \left( C \Sigma_{t-1} C^\top + \hat{V}_t\right)^{-1} C \Sigma_{t-1} A^\top, 
    \end{multline*}
    breaking the recursion. To overcome these challenge, we approximate $r_{t}(X)$ in \eqref{eq:r_rec} with a first-order approximation around the nominal prediction error covariance $\hat{\Sigma}_{t}:$ 
     \begin{equation}\label{eq:approx_r_lin}
     \bar{r}_{t}(X ) = r_{t}( \hat{\Sigma}_{t}) + \Tr\left( G_t (X  -  \hat{\Sigma}_{t}) \right)
     \end{equation}
     where
{$$ G_{t} :=  \left. \pdv{r_{t}}{X} ( X)\right|_{X= \hat{\Sigma}_{t}} = \hat{\Sigma}_{t}^{-1} M_{t} A^\top S_{t+1} A M_{t} \hat{\Sigma}_{t}^{-1} $$
with  
$M_{t} =  (\hat{\Sigma}_{t}^{-1} \! + C^{\top} \hat{V}_t^{-1} C)^{-1}.$ }

    Considering  now \eqref{eq:approx_r_lin} leads to the linearized value function 
     \begin{equation}\label{eq:Vtilde_lin_N}
     \begin{aligned}
         \bar \Vc_{t} (I_{t}) &= {\mathbb{E}_{\mathbb{P}_{w_t}}} 
        \left[\frac{1}{2}x_{t}^\top P_{t} x_{t} 
        + \frac{1}{2} \xi_{t}^\top \bar S_{t} \xi_{t} | I_{t} \right] + z_{t} + c_t
     \end{aligned}
     \end{equation}
    where
    \begin{align*}
        \bar S_{t} \!  & \! = Q + A^\top P_{t+1}A -  P_{t}  + \hat{\Sigma}_{t}^{-1} M_{t}  A^\top S_{t} A M_{t} \hat{\Sigma}_{t}^{-1}
        \\
        c_{t} &= r_{t}(\hat{\Sigma}_{t}) - \Tr( G_{t} \hat{\Sigma}_{t} )\\
        & = \Tr \left( S_{t+1} A M_{t} C^{\top} \hat{V}_t^{-1} C  M_{t} A^\top \right) .
    \end{align*} 
   
    It is now easy to see that the value function $\bar \Vc_{t}$ in \eqref{eq:Vtilde_lin_N} satisfies the dynamic programming recursion in Proposition \ref{propo:recursion}, leading us to define the following solution scheme.

    \emph{Solution scheme:} 
    Let $P_{N+1} = Q_{N+1}, \, \bar{S}_{N+1} = 0, \, \bar{z}_{N+1} = 0, \, \bar{c}_{N+1} = 0$ and consider the recursions
    {\luci
    \begin{align} 
    P_t & = Q + A^\top \left( P_{t+1}^{-1} + BR^{-1}B^\top - \frac{\hat{W}_t}{\tau_t} \right)^{-1}A, 
    \label{eq:P_rec_new} \\
    \bar{S}_t  & = \hat{\Sigma}_{t}^{-1} M_{t} A^\top \bar{S}_{t+1} A M_{t} \hat{\Sigma}_{t}^{-1}  + Q + A^\top P_{t+1}A - P_t \label{eq:Sbar_rec_lin} \\
    \bar z_{t} &= \bar z_{t+1} - \frac{\tau_t}{2}\ln \left| I - \frac{\hat{W}_t(P_{t+1} + \bar S_{t+1})}{\tau_t}\right| \label{eq:zbar_rec} \\
    \bar c_t & = \bar c_{t+1} + 
    \Tr \left( 
    \bar{S}_{t+1} A M_{t} C^{\top} \hat{V}_t^{-1} C  M_{t} A^\top     
    \right)  \label{eq:cbar_rec_lin}
    \end{align} 
    }
    for $t = T-1, \dots, 0.$

    Let $\tau = (\tau_0, \dots, \tau_{T-1})$ and assume $\tau \in \mathcal{T} :=  \mathcal{T}_0 \times \cdots \times \mathcal{T}_{T-1} $ with 
    \begin{equation}
    \Tc_t \!:= \! \{ \tau_t \!\in\! \mathbb{R}_{> 0}  : \:     
    \hat{W}_t^{-1 } \!-\! \frac{P_{t+1}}{\tau_t}\!\succ\! 0, \; 
     \hat{W}_t^{-1 } \! -\! \frac{P_{t+1} + \bar S_{t+1}}{\tau_t}\!\succ\! 0 \} .
    \end{equation}
    At each time step $t,$ the \textit{approximated} value function of problem \eqref{eq:problem:relaxed} (computed 
    by using the linearized $\bar{r}_{t}(X )$ in
    \eqref{eq:approx_r_lin})
     is given by
    \begin{equation} \label{eq:Vtilde_lin_t}
    \bar \Vc_t(I_t)  = {\mathbb{E}}_{\mathbb{P}_{w_t}} \!\left[\frac{1}{2}(x_{t}^\top P_{t}x_{t} + \xi_{t}^\top {\bar S}_{t}\xi_{t}) | I_{t} \right] \!+\! \bar z_{t} + \bar c_t
    \end{equation}
    with  $\xi_t = x_t  - \tilde{x}_t $ and $\tilde{x}_t = {\mathbb{E}}^o [x_t | I_t]$ where ${\mathbb{E}}^o $ denotes the expectation with respect to the worst-case distribution $\mathcal{N}(\mu_t^o, W_t^o)$, whose moments are given 
    by \eqref{eq:wc_mu} and \eqref{eq:wc_cov} upon substituting $S_{t+1}$ with $\bar S_{t+1}$. Finally, the optimal control input (for the \textit{approximated} value function) is $$ u^o_t = K^o_t \tilde{x}_t$$ with $K^o_t$ defined in \eqref{eq:drc_policy}.
    Notice that $\tilde{x}_t $ can be recursively computed as 
    \begin{equation}\label{eq:state_estimation}
    \tilde{x}_{t+1} = A\tilde{x}_{t} + Bu_t +  \tilde{\mu}^o_t + L^o_{t+1} (y_{t+1} - C\tilde{x}_{t} )
    \end{equation}
    for $t = 0, \dots T-1$ where the Kalman gain is 
    \begin{equation}\label{eq:Kalman_gain}
    L^o_t = A \Sigma_t^o C^\top (C \Sigma_t^o C^\top + \hat{V}_t)^{-1}.
    \end{equation}
    Here, $\Sigma_{t}^o$ is the worst-case prediction error covariance matrix at time $t$ and it is recursively computed as  
    \begin{equation}\label{eq:wc_cov_estimation}
    \Sigma_{t+1}^o = A \Sigma_t^o A^\top - A \Sigma_t^o C^\top (C \Sigma_t^o C^\top + \hat{V}_t)^{-1} C \Sigma_t^o A^\top + W^o_t. 
    \end{equation}

    \begin{remark}[On approximating $r_{t}(\Sigma_t)$]

    The non-linear dependence of \( r_{t} \) on \( \Sigma_t \)  is also present in other distributionally robust control formulations, such as the data-driven Wasserstein-based approach discussed in \cite{hakobyan2022wasserstein,hakobyan2024wasserstein}. In the search for closed-form recursions, 
    \cite{hakobyan2022wasserstein,hakobyan2024wasserstein}  disregard such dependence. Conversely, we account for it via a suitable approximation, while retaining tractability. For completeness, we show the benefits of our formulation compared to the one in \cite{hakobyan2022wasserstein,hakobyan2024wasserstein} in Appendix E on a simplified setting amenable to both algorithms. Finally, we notice that other linear approximation of $r_t(\Sigma_t)$ are possible; for example, one can consider
    $\bar{\bar r}_t (X) = \frac{1}{2} \Tr(S_{t+1} A X A^\top )  $.
    In this case, since $r_t (X) \leq \bar{\bar r}_t (X)$ for any $X \in \mathbb{S}^n$, the resulting approximated value function $\bar{\bar{\mathcal{V}}}_t$  satisfies $ \mathcal{V}_t (I_t)  \leq \bar{\bar{\mathcal{V}}}_t (I_t)$ for any $ I_t$ and  $t = T, \dots, 0$, returning a valid upper bound for the control problem \eqref{eq:problem:relaxed}.  
    \end{remark}
    

    \subsection{Constrained problem}\label{sec:exact}
    Next, we extend the results of Subsection \ref{sec:approx} to the constrained minimax Problem \ref{eq:problem:DP} resorting to the Lagrange duality theory. Let $\tau = (\tau_0,\ldots, \tau_{T-1})$ be the Lagrangian multipliers vector; the dual problem associated to \eqref{eq:problem:DP}  is\footnote{Contrary to Subsection \ref{sec:approx}, we will consider optimizing over $\tau$, hence we use the notation $\mathcal{V}_t(I_t, \tau)$ to emphasize such dependence.}
    \begin{align*}
    & \inf_{\tau \in \R_+^n} 
    \inf_{\pi \in \Pi} 
    \sup_{\gamma \in \Gamma} 
    J(\pi, \gamma)  - \sum_{t=0}^{N} \tau_t (\Rc_t - \rho_t)  \\
    & = 
    \inf_{\tau \in \R_+^n} \Vc_0(I_0, \tau)  + \sum_{t=0}^{N} \tau_t  \rho_t
    \end{align*}
    where the equivalence follows from \eqref{eq:problem:DP}. 
    We consider the linearized dual problem
    \begin{equation} \label{pb:dual_approx}
    \inf_{\tau \in \R_+^n} \bar \Wc_0 (I_0, \tau) := \bar \Vc_0(I_0, \tau)  + \sum_{t=0}^{N} \tau_t  \rho_t
    \end{equation}
    with $ \bar \Vc_0 (I_0, \tau)$ defined in \eqref{eq:Vtilde_lin_t}.
    
    Consider the coordinate gradient descent scheme, where at each iteration $t \in \mathbb{N}$ we update each $i$-th component of the vector $\tau$ sequentially\footnote{One might also randomly select the order of the updates at each iteration $t$ rather than considering a cyclic pattern. Randomized updates might enhance numerical stability.} for $i = \{0,\ldots, T-1\}$ by solving the one-dimensional subproblem
    \begin{equation}\label{CGD}
        \tau_i^{t+1} = \argmin_{\xi\in \R_+} \bar \Wc_0(I_0, \tau_{0}^{t+1},, \ldots, \tau_{i-1}^{t+1}, \xi,  \tau^{t}_{i+1}, \ldots, \tau_T^t).
    \end{equation}
    
    The overall procedure to solve \eqref{eq:problem:DP} is summarized in Algorithm 1.  We get the following result.

    \begin{theorem}\label{CGD:convergence}
    Let $\tau^t$ be the sequence generated by the coordinate gradient descent scheme ta time $t$ using the updates in \eqref{CGD}. Then, the sequence $\{ \bar \Wc_0(I_0, \tau^t) \}_{t=0}^{\infty}$  converges to the optimal value of the dual problem \eqref{pb:dual_approx} at a rate no worse than $\mathcal{O}\left( \frac{1}{t} \right).$
    \end{theorem}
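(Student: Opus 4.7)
The plan is to cast the iterates \eqref{CGD} as a cyclic block coordinate descent on the smooth convex function $\bar\Wc_0(I_0,\cdot):\Tc\to\mathbb{R}$ and then invoke a standard $\mathcal{O}(1/t)$ convergence rate for convex coordinate descent on $C^{1,1}$ objectives with bounded sublevel sets (in the spirit of Beck and Tetruashvili, 2013). The argument splits into three ingredients: convexity of $\bar\Wc_0$ in $\tau$, Lipschitz smoothness of its gradient, and boundedness of its sublevel sets.

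First I would establish that $\bar\Wc_0(I_0,\tau)$ is jointly convex on $\Tc$. The starting point is a variational representation: for any admissible $(\pi,\gamma)$ the Lagrangian $J(\pi,\gamma) - \sum_t \tau_t(\Rc_t - \rho_t)$ is affine in $\tau$, so $\sup_\gamma$ already yields a convex function. To propagate convexity through the subsequent $\inf_\pi$, I would exploit the closed-form recursions \eqref{eq:P_rec_new}--\eqref{eq:cbar_rec_lin}: the key term $-\tfrac{\tau_t}{2}\log\!\left|I - \hat W_t(P_{t+1}+\bar S_{t+1})/\tau_t\right|$ appearing in $\bar z_t$ is the perspective of $-\log\det$, hence jointly convex in $(\tau_t, P_{t+1}+\bar S_{t+1})$, and a backward induction from $t=T-1$ down to $t=0$ can then carry convexity through the matrix inversions in \eqref{eq:P_rec_new}--\eqref{eq:Sbar_rec_lin}. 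The positivity conditions $\hat W_t^{-1} - P_{t+1}/\tau_t \succ 0$ and $\hat W_t^{-1} - (P_{t+1}+\bar S_{t+1})/\tau_t \succ 0$ built into the definition of $\Tc_t$ keep every matrix argument inside the convex domains of the operator-convex functions involved.

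For smoothness and coercivity, on any compact subset of $\Tc$ the function $\bar\Wc_0$ is $C^\infty$ in $\tau$, since each recursion step is a smooth composition of matrix products, inverses, traces, and log-determinants of positive-definite arguments. Hence its gradient is locally Lipschitz with coordinate constants $L_i$ that can be bounded in terms of the system matrices, the nominal covariances $\hat W_t, \hat V_t$, and the diameter of the initial sublevel set. Boundedness of sublevel sets then follows because $\bar\Wc_0 \to +\infty$ both as any $\tau_i \to 0^+$ (the $\Tc_t$ positivity conditions fail and the log-det term diverges) and as $\tau_i \to \infty$ (the linear term $\tau_i\rho_i$ dominates). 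Combining the three ingredients with the standard one-cycle sufficient-decrease inequality for cyclic block coordinate descent and telescoping over iterations yields $\bar\Wc_0(I_0,\tau^t) - \inf_\tau \bar\Wc_0(I_0,\tau) \leq C/t$, for an explicit constant $C$ depending on $L_{\max}$, the initial optimality gap, and the sublevel-set diameter.

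The main obstacle will be the convexity step: the $\tau$-dependence of $(P_{t+1},\bar S_{t+1})$ enters $\bar z_t$ nonlinearly through matrix inverses and log-determinants, so a naive chain of convexity-preserving operations fails. The delicate part is to set up the backward induction so that the perspective of $-\log\det$ and the Riccati-type step \eqref{eq:P_rec_new} compose correctly, and to verify that the constraints encoded in $\Tc_t$ keep each argument in the convex domain of the relevant operator-convex function. Once this is in place, the smoothness, coercivity, and the appeal to the standard cyclic coordinate descent rate are routine.
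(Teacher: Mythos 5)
Your high-level architecture matches the paper's: establish joint convexity of $\bar \Wc_0(I_0,\cdot)$ on $\Tc$, then invoke a standard $\mathcal{O}(1/t)$ rate for cyclic coordinate descent on smooth convex functions (the paper cites Nocedal--Wright Ch.~9 where you cite Beck--Tetruashvili; your extra attention to coordinate-wise Lipschitz constants and bounded sublevel sets is, if anything, more careful than the paper's terse citation). The problem is that the heart of the proof --- convexity of $\bar\Wc_0$ in $\tau$ --- is exactly the step you flag as ``the main obstacle'' and leave open, and the route you sketch is unlikely to close it. Propagating convexity backward through \eqref{eq:P_rec_new}--\eqref{eq:cbar_rec_lin} requires showing that $\tau \mapsto P_{t+1}(\tau)+\bar S_{t+1}(\tau)$ is matrix-convex (jointly in $\tau_{t+1},\dots,\tau_{T-1}$) before you can compose it with the perspective of $-\log\det$; but the Riccati-type map $(P,\tau_t)\mapsto \bigl(P^{-1}+BR^{-1}B^\top-\hat W_t/\tau_t\bigr)^{-1}$ mixes a matrix-convex term ($P^{-1}$) with a matrix-concave one ($-\hat W_t/\tau_t$) inside an inversion, and matrix convexity does not survive composition the way scalar convexity does. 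No chain of operator-convexity rules is exhibited (or obviously available) that carries convexity through $T$ nested inversions, so the induction you propose does not get off the ground.

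The paper avoids this entirely with a variational (lifting) argument: for invertible $A$ it constructs an auxiliary linear system $\omega_{t+1}=A\omega_t+Bu_t+w_t$ with an inflated initial covariance $\Omega_0$ and auxiliary observations $e_t$ chosen so that the resulting control problem $\mathcal{Q}_0(\tilde I_0,\tau)$ has value identical to $\bar\Vc_0(I_0,\tau)$. In that representation the integrand is jointly convex in $(u_0,\dots,u_{T-1},\tau)$ for every fixed adversary density, so the supremum over densities is convex by Lemma \ref{lemm:convexity_max} and the partial minimization over inputs preserves convexity by Lemma \ref{lemm:convexity_min}; singular $A$ is handled by perturbing to $A+\epsilon I$ and passing to the limit. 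If you want to salvage your write-up, you should replace the backward-induction convexity argument with this kind of joint-convexity-then-marginalize argument (or find an operator-convexity proof for the recursions, which would be a nontrivial contribution in its own right); as written, the decisive step is missing.
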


\begin{algorithm}[t!]\label{alg:CGD}
    \caption{DR-LQG with endogenous ambiguity sets}\label{alg}
    \begin{algorithmic}
    \Require{ \texttt{sys},$\hat{V}_t, \hat{W}_t, \{\rho_t\}_{t=0}^{T-1}$} 
    \vskip 0.2cm
    \State \textbf{Offline part:}
    \State Initialize $\{\tau_t^{(0)}\}_{t=0}^{T-1}$ and set $j = 0$
    \Repeat   
            \State 
            $\bar{\mathcal{W}}_0(I_0, \{\tau_t^{(j)}\}_{t=0}^{T-1}) \: \leftarrow$ \texttt{Recurs}(\texttt{sys}, $\{\tau_t^{(j)}\}_{t=0}^{T-1}$)
            \For{$i = 0,\ldots, T-1$}
                \State                 {\small
                $
                        \tau_i^{(j+1)} \!\!\coloneqq \!\argmin_z \: \bar{\mathcal{W}}_0\left(\!I_0,\! \left[\{\tau_t^{(j)}\}_{t=0}^{i-1},z,\!\{\tau_t^{(j)}\}_{t=i+1}^{T-1}\right]\right)
                $
                 }            
            \EndFor
            \State Set $j = j+1$
    \Until{convergence} 
    \State Set $(\tau_0^\star, \ldots, \tau_{T-1}^\star) = (\tau_0^{(j)}, \ldots, \tau_{T-1}^{(j)})$
    \State Compute $K_t^\star, L_t^\star, {\mu}_t^\star, {W}_t^\star$ based on $\tau^\star$
    \vskip 0.2cm
    \State \textbf{Online part:}
    \For{$t=N$ to $0$} 
            \State Apply $u_t^\star = K_t^\star \tilde{x}_t$ to system \eqref{eq:nominal:system}
            \State Measure $y_{t+1}$ and estimate $\tilde{x}_{t+1}$ via \eqref{eq:state_estimation}
    \EndFor
    \end{algorithmic}
    \end{algorithm}
    
    \begin{algorithm}[t!]
    \caption{\texttt{Recursions}} \label{alg2}
    \begin{algorithmic}
    \Require system/cost matrices, $\hat{V}_t, \hat{W}_t, \rho_t, 
    \{\tau_t^{(j)}\}_{t=0}^N$ 
    \Ensure{total cost $\bar{\mathcal{W}}_0(I_0, \{\tau_t^{(j)}\}_{t=0}^{T-1})$} 
    \vskip 0.2cm
    \For{ $t=T$ to $t=0$} \Comment{Backward pass}
        \State Compute recursion matrices via \eqref{eq:P_rec_new}-\eqref{eq:cbar_rec_lin}
        \State Compute control gain via \eqref{eq:drc_policy}
        \State Compute worst-case distribution via \eqref{eq:wc_mu}, \eqref{eq:wc_cov}
    \EndFor
    \For{ $t=0$ to $t=T$ } \Comment{Foreward pass}
        \State Compute Kalman gain via \eqref{eq:Kalman_gain}
        \State Compute worst-case prediction covariance via \eqref{eq:wc_cov_estimation}
    \EndFor
    \State Compute $\bar{\mathcal{W}}_0(I_0, \{\tau_t^{(j)}\}_{t=0}^{T-1}) \coloneqq\bar{\mathcal{V}}_0(I_0) + \sum_{t=0}^{T-1} \tau_t^{(j)} \rho_t$
    \end{algorithmic}
    \end{algorithm}

    \section{Simulations}\label{sec;Simulations}
    
    \subsection{Example 1} We consider a linear model \eqref{eq:nominal:system} with 
    $$
    A =     \begin{bmatrix} 
    1.1 & 0.1 \\ 0 & 0.95
     \end{bmatrix},
    \; 
    B =  \begin{bmatrix}     0.2 \\ 1    \end{bmatrix},  
    \;
    C  =    \begin{bmatrix} 1 & 0 \end{bmatrix} .
    $$
    We set the radii to be $\rho_{x_0} = \rho_{w_t} = \rho_{v_t} = 1$ and the nominal covariances to $\hat{W}_t = 0.001 I_2$, $\hat{V}_t = 0.001$ for all times $t$, and $W_{-1} = 0_{2}$. We set $Q = I_2, Q_t = 10 I_2, R = 0.1$, $T = 20$, and $\hat{x_0} = [ 0, 0]^\top$. We implement the iterative best response dynamics in \eqref{eq:BR:continuous} in Python 3.8.6 using the Scipy package and the RK45 ODE solver. Fig.~\ref{fig:ex1:br} shows the empirical convergence behavior of the best response dynamics, confirming the exponential convergence rate from Theorem \ref{thr:convergence:CT}. 
    

    The DR-LQG controller was benchmarked against a standard LQG based on the nominal distributions. We carried out 5000 Montecarlo simulations with exogenous disturbances distributed according to the true distributions $\mathbb{P}_{x_0}, \mathbb{P}_{w_t}, \mathbb{P}_{v_t}$ selected randomly from the ambiguity sets. The DR-LQG controller led to an average cost across the Montecarlo run of 0.6287 and a standard deviation of 0.7125, while the standard LQG led to an average cost of 0.6587 and a standard deviation of 0.7588. The results confirm the effectiveness of the proposed approach to provide robustification against distributional ambiguity.

    \begin{figure}
        \centering	\includegraphics[width=0.85\linewidth]{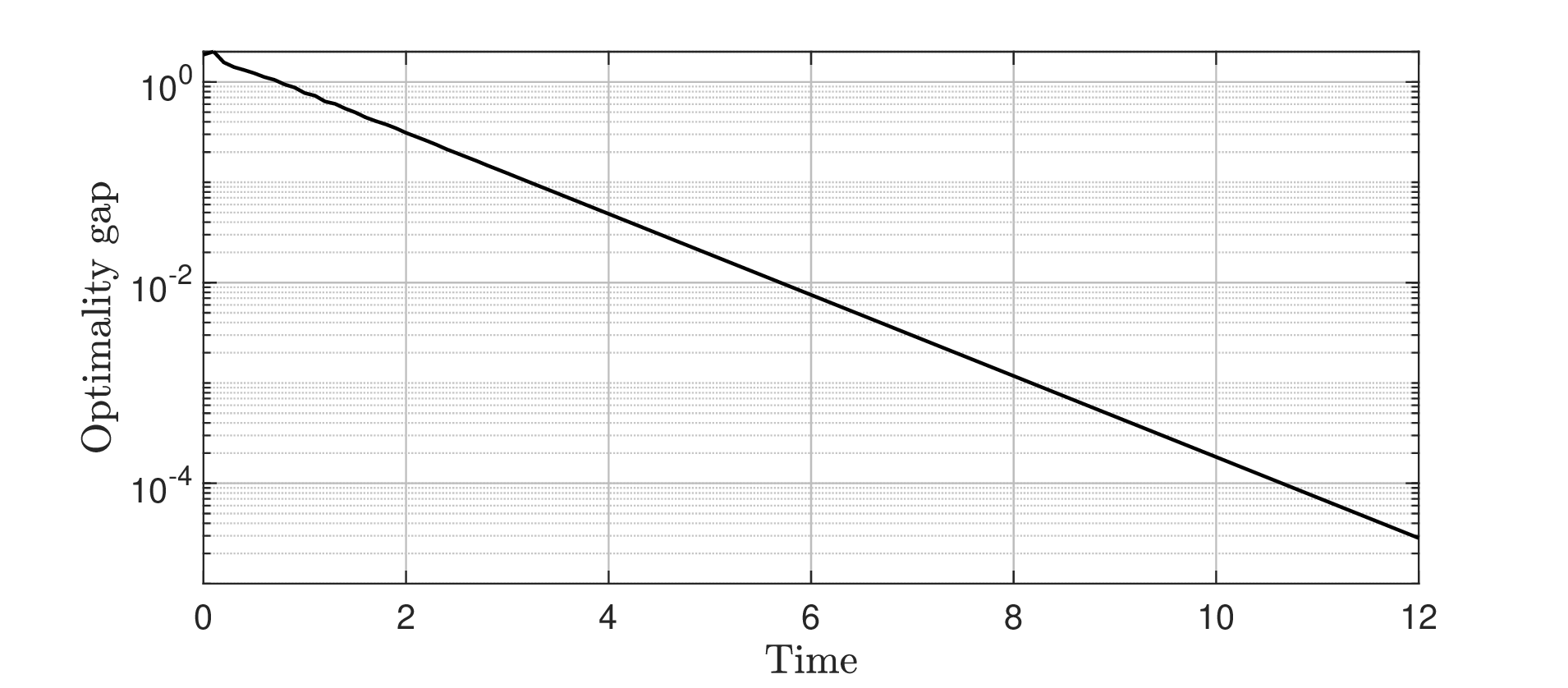}
        \caption{Example 1: Convergence of the optimality gap for the best response dynamics.}
        \label{fig:ex1:br}
    \end{figure}
    
    \subsection{Example 2}
    We show the benefits of our DR-LQG controller with endogenous ambiguity sets \eqref{eq:problem:DP}. Consider the linear model \eqref{eq:nominal:system} with
    $$
    A =     \begin{bmatrix} 
    1.05 & -0.05 \\ 0.5 & 1.05
     \end{bmatrix},
    \; 
    B =  \begin{bmatrix}     0.2157  \\ 0.2367    \end{bmatrix},  
    \;
    C  =    \begin{bmatrix} -200 & 100 \end{bmatrix} .
    $$
    We assume that, for all $t$, $w_t$ follows a Gaussian distribution with zero-mean and nominal covariance  
    $$
     \hat{W}_t  =  \begin{bmatrix} 0.047 & 0.095 \\  0.095 & 0.2  \end{bmatrix},
    $$
   while $v_t$ follows a zero-mean Gaussian distribution with covariance $ \hat{V}_t = I.$  We set $\hat x_0 = [0.1 \;\;  0.1 ]^\top, W_{-1} = 10^{-4} I_2.$ 
    We consider a horizon $T= 50$ and use the cost matrices
    $$ 
    Q = 
    \begin{bmatrix} 
        0.25 & -0.25 \\ -0.25 & 0.25
    \end{bmatrix}, \; \;
     Q_{T} = 10^{-3} 
    \begin{bmatrix} 
        0.625 & -0.275 \\ -0.275 & 0.125
    \end{bmatrix}
    $$
    and $R =  10^{-3} I.$ We assume that the matrix $A$ contains some uncertainty and that the real underlying system evolves with a dynamics matrix $\tilde A =  A + \Delta A $ where 
    $$ \Delta A  = 
     \begin{bmatrix} 
     a & b \\ 
     0 & a 
     \end{bmatrix}.$$ 
    The coefficients $a$ and $b$ are unknown parameters with
    $ |a| \leq a_{M} := 0.047 $ 
    and 
    $ | b | \leq b_{M} := 0.03.$
    In other words, the real underlying system evolves according to the dynamics 
    $x_{t+1} = A x_t + B u_t + \tilde w_t $
    where $\tilde w_t = \Delta A x_t + w_t \sim \mathcal{N} (\Delta A x_t , \hat{W}). $ 
    For the ambiguity set, we use $z_t = E_1 x_t $ with  
    $$E_1 =  V^{-\frac{1}{2}} 
    \begin{bmatrix}  a_{M} & b_{M} \\  0 & a_{M}  \end{bmatrix}
    = 
    \begin{bmatrix}  0.996 & 0.204 \\  -0.431 & 0.026  \end{bmatrix}.
    $$ 
    and  $\rho_t = 10^{-5}.$ We compare our controller, which we term D$^2$O-LQG controller, with the standard LQG controller. Additionally, we consider the DRC controller with a single relative-entropy constraint (D-LQG) from \cite{Petersen2000}, using $\rho = \sum_{t=0}^{T-1} \rho_t.$ We consider two scenarios: (i)  Nominal scenario, with $ \Delta A = 0$; (ii) Perturbed scenario, with $a = 0.03$ and $b=0.02.$ The results of the simulations are summarized in Figg.~\ref{fig:ex2_nominal} and \ref{fig:ex2_pertubed}. 
    The standard LQG technique is not able to stabilize the system in the perturbed scenario, causing the cost index to increase dramatically. The D-LQG controller with a single constraint has not a satisfactory behaviour: this is due to the fact that  the maximizing player is allowed to allocate most of the mismatch budget to few (or even one) time intervals. On the other hands, D$^2$O-LQG control in able to trade off optimality and robustness. In the nominal scenario the average closed-loop cost is slightly larger than the pure LQG optimum. This cost remains almost constant when $ \Delta A \neq 0,$ giving evidence to the robustness properties of the control system.

    \begin{figure} 
        \centering	\includegraphics[width=0.75\linewidth]{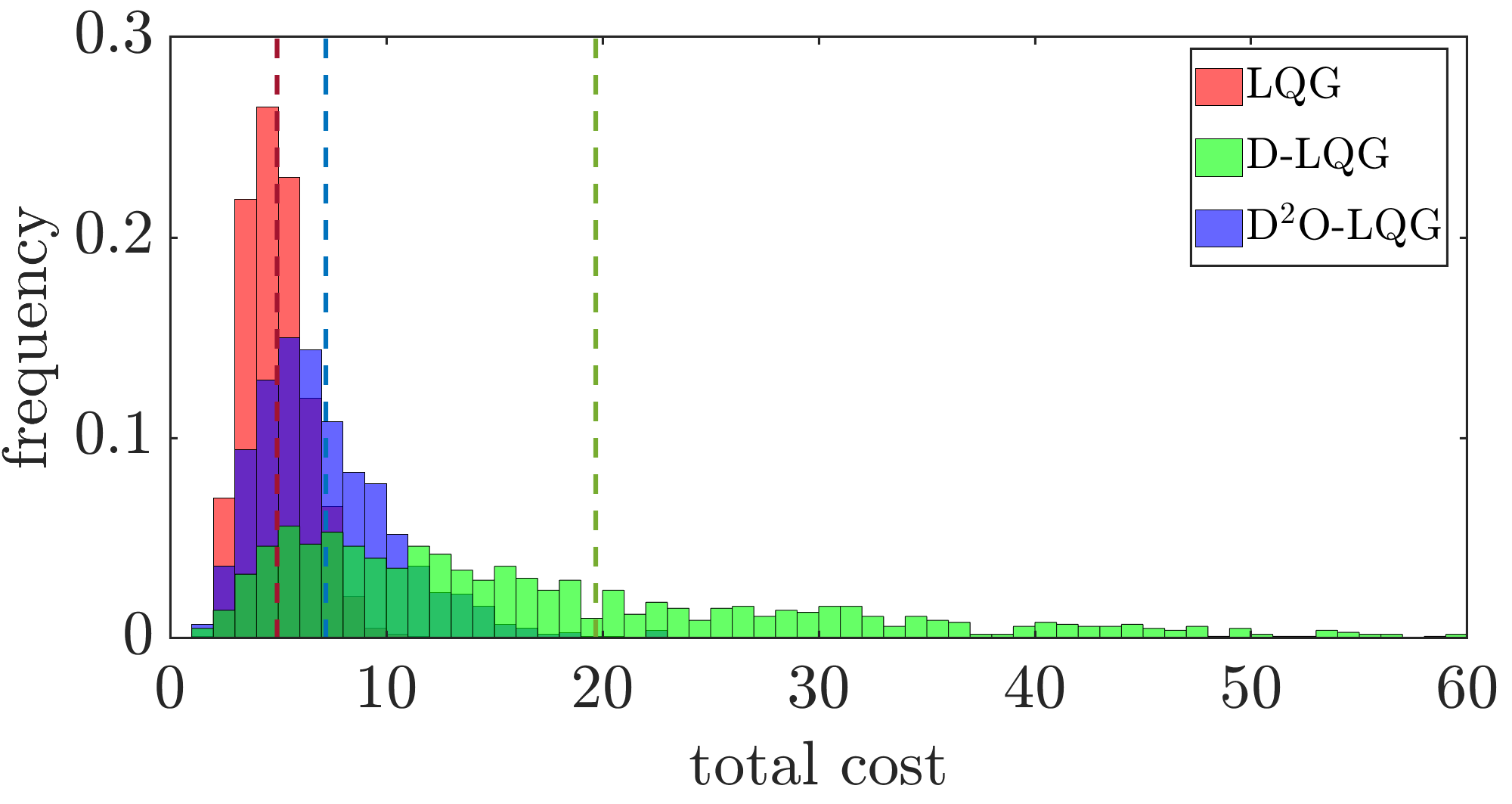}
        \caption{Example 2: nominal scenario with  $ \Delta A = 0.$ Histogram of the cost distribution across 1000 Monte Carlo experiments. The dashed lines represent the sample means of the costs. }
        \label{fig:ex2_nominal}
    \end{figure}
    
    \begin{figure} 
        \centering	\includegraphics[width=\linewidth]{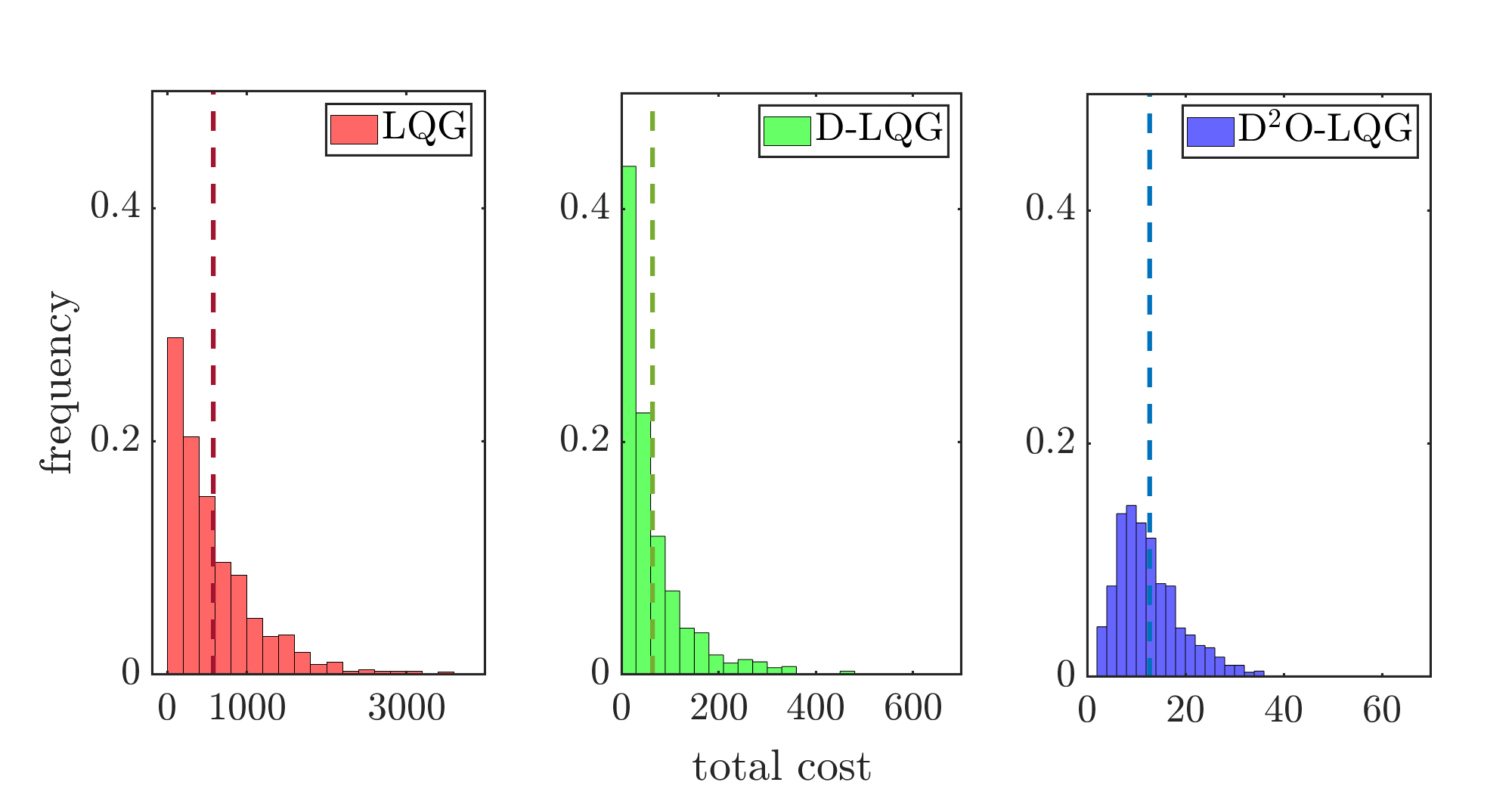}
            \caption{Example 2: perturbed scenario with  $ \Delta A \neq 0.$ Histogram of the cost distribution across 1000 Monte Carlo experiments. The dashed lines represent the sample means of the costs. Notice the different $x$-axis scale.}
        \label{fig:ex2_pertubed}
    \end{figure}
    
    \subsection{Example 3}
    In the last example, we aim to show that the scheme proposed in Section \ref{sec:endo} outperforms the one in \cite{hakobyan2024wasserstein} based on the Wasserstein distance. As \cite{hakobyan2024wasserstein} does not directly handle decision-dependent ambiguity sets, we consider a simplified setting with exogenous uncertainty and adapt the recursions in Proposition \eqref{propo:recursion} accordingly by setting $E_1 = E_2 = 0$. We consider a building temperature control problem using a state-space model borrowed from \cite{hewing2020recursively} and affected by uncertainty in the ambient temperature $T^a_t \sim \mathcal{N}(\bar{T}^a_t + \hat{\mu}_t, \hat{W}_t)$\footnote{The recursions in this case are slightly different; but can be easilly derived using the same reasoning as in Proposition \ref{propo:recursion}.}. The control problem seeks to regulate the rooms temperature to a reference $r = 21^\circ$C at the minimum power consumption via the stage cost $\|x_t - r\|_Q^2 + \|u_t\|_R^2$.

    To fit the building temperature control problem into the presented framework, we define the error state $e_{t} = x_t - r$ and consider the error dynamics
    \begin{equation}\label{eq:building2}
    \begin{aligned}
   e_{t+1} & = A e_t + Bu_t + w_t,\\
   y_t & = Ce_t + v_t,
   \end{aligned}
   \end{equation}
    with $w_t \sim \mathcal{N}(F\bar{w}_t + (Ar - r) + F\hat{\mu}_t, F\hat{W}_t F^\top)$, $v_t \sim \mathcal{N}(Cr, V)$, and $e_0 \sim \mathcal{N}(\hat{x}_0 - r, \Sigma_0)$.  We let $W = I_2$, $\hat{x}_0 =\begin{bmatrix}20.5 & 19.75 & 20 & 20.2\end{bmatrix}^\top$ and $\Sigma_0 = 0.1I_2$. {\marti For the sake of the simulation, we set the uncertainty budget $\rho_t$ a-posteriori based on the knowledge of the true process noise distribution, considering $d_t = 1.1 \mathcal{R}_t$ where $\mathcal{R}_t$ is the relative entropy between the nominal and the true distribution at time $t$. We compare the proposed D$^2$O-LQG controller with the standard LQG controller and the DRC controller with a constant distributional ambiguity budget $\theta$ per each time step proposed in \cite{hakobyan2024wasserstein}\footnote{We use the public code from the authors accessible at \url{https://github.com/CORE-SNU/PO-WDRC}.} (W-DRC). As before, we set $\theta = \frac{1}{N}\sum_{t=0}^{N-1}(1.1 d^{\text{W}}_t)$, where $d^{\text{W}}_t$ is the a-posteriori Wasserstein distance between the true and the nominal distribution.  Results are summarized in Fig.~\ref{fig:ex1}. Again, the standard LQG controller does not offer any robustness against model misspecification; thus, its performance rapidly deteriorates in the presence of distributional ambiguity. On the other hand, while the W-DRC controller from \cite{hakobyan2024wasserstein} offers a degree of robustness, its practical performance is hindered by the requirement for the same ambiguity set size (e.g., the same $\theta$) throughout the entire control task.
   
   \begin{figure} 
       \centering	\includegraphics[width=0.75\linewidth]{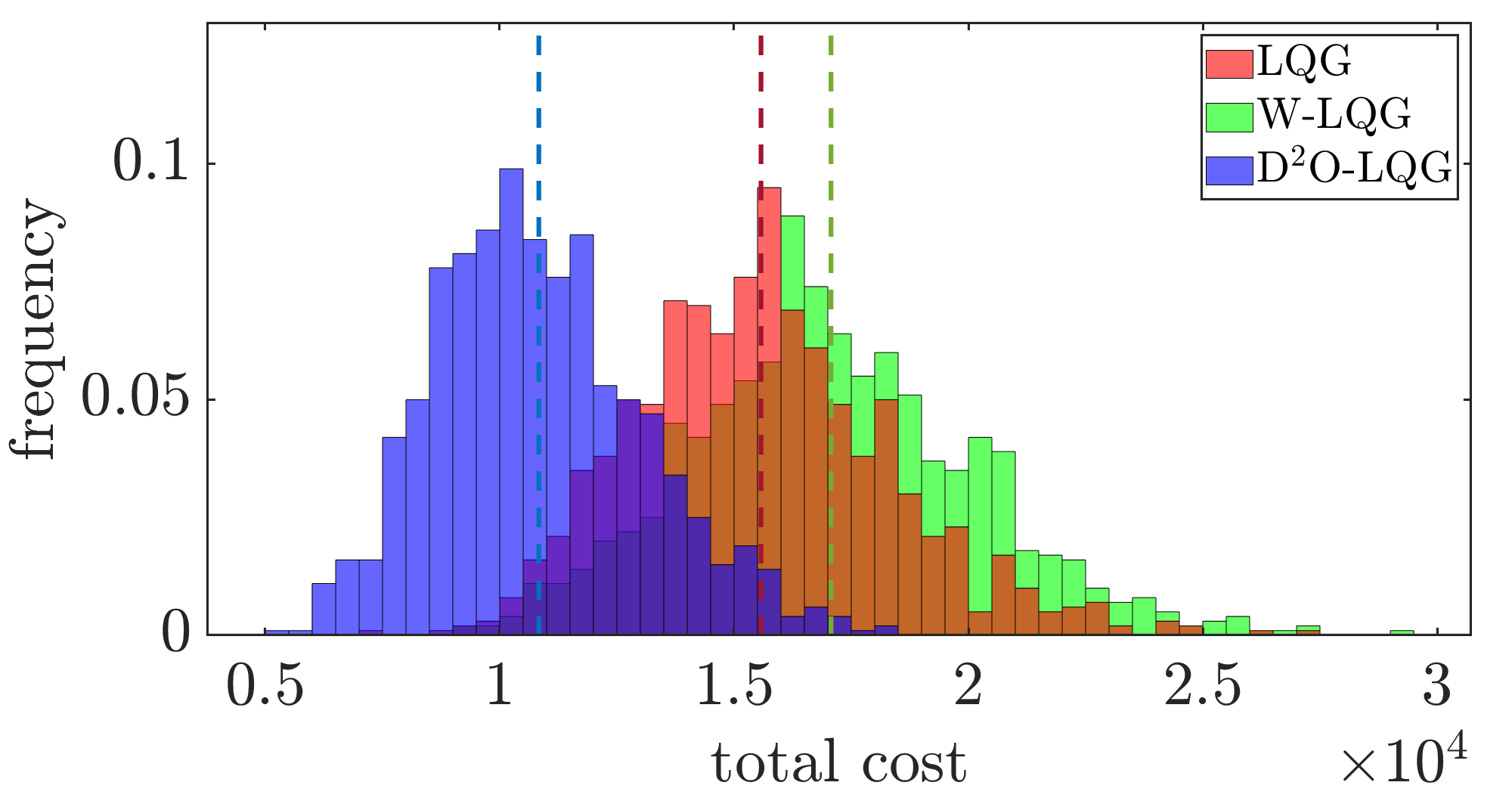}
       \caption{Example 1. Histogram of the cost distribution across 1000 Monte Carlo experiments. The dashed lines represent the sample means of the costs.}
       \label{fig:ex1}
   \end{figure}


    \section{Conclusions}\label{sec:conclusions}
    For discrete-time stochastic linear systems, we propose an output feedback controller capable of robustifying the standard LQG approach against distributional ambiguity affecting both process and measurement noise by relying on KL ambiguity sets. Our analysis shows that linear policies are still optimal despite the added complexity; moreover, the worst-case distribution is still a Gaussian. These insights led us to design an iterated best response dyanmics scheme that provably convergences to the set of saddle points and admits closed-form expressions. Further, we consider the case of decision-dependent ambiguity sets to capture model perturbations. For this setting, we devise a tailored approximated recursive scheme based on dynamic programming and coordinate gradient descent.

    
    \bibliographystyle{ieeetr}        
    \bibliography{references_new}           
    

    \section*{Appendix A: System matrices}
    The stacked cost matrices are defined as $\mathbf{Q} = \text{diag}(Q,\ldots, Q, Q_T) \in \mathbb{S}_+^{n(T+1)},\: \mathbf{R} = \text{diag}(R, \ldots, R) \in \mathbb{R}_{++}^{mT}$. The stacked system matrices $\mathbf{C} \in \mathbb{R}^{p T \times n(T+1)}, \bG \in \mathbb{R}^{n(T+1) \times n(T+1)}$ and $\bH \in \mathbb{R}^{n(T+1) \times m T}$ are defined as

$$
\bC\!=\!\left[\!\begin{array}{ccccc}
    C & 0 &        &    & \\
      & C & 0      &    & \\
      &   & \ddots &    & \\
      &   &        & C  & 0
    \end{array}\!\right]\!, \: \bG\!=\!\left[\!\begin{array}{cccc}
A^0 & & & \\
A^1 & A^1 & & \\
\vdots & & \ddots & \\
A^T & A^T & \ldots & A^T
\end{array}\!\right]
$$

and

$$
\bH=\left[\begin{array}{ccccc}
0 & & & & \\
A^1 B & 0 & & & \\
A^2 B & A^2 B & 0 & & \\
\vdots & & & \ddots & \\
\vdots & & & & 0 \\
A^T B & A^T B & \ldots & \ldots & A^T B
\end{array}\right].
$$

    \section*{Appendix B: Auxiliary results}
    The following result  is proven in \cite{hadjiyiannis2011efficient, taskesen2023distributionally}.
    \begin{lemma}\label{lemma:change}
        If $\bu=\bU \boldeta+\bq$ for some $\bU \in \mathbb{R}^{mT \times pT}$ and $\bq \in \mathbb{R}^{m T}$, then $\bu=\bU^{\prime} \by+\bq^{\prime}$ for $\bU^{\prime}=(I+\bU \bC \bH)^{-1} \bU$ and $\bq^{\prime}=(I+\bU \bC \bH)^{-1} \bq$. Conversely, if $\bu=\bU^{\prime} \by+\bq^{\prime}$ for some $\bU^{\prime}\in \mathbb{R}^{mT \times pT}$ and $\bq^{\prime} \in \mathbb{R}^{m T}$, then $\bu=\bU \boldeta+\bq$ for $\bU=\left(I-\bU^{\prime} \bC \bH\right)^{-1} \bU^{\prime}$ and $\bq=\left(I-\bU^{\prime} \bC \bH\right)^{-1} \bq^{\prime}$.
    \end{lemma}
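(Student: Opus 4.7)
The plan is to derive both formulas by a direct substitution and then verify that the relevant matrices are invertible.

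The key algebraic identity to establish first is the relationship between $\by$ and $\boldeta$ in terms of $\bu$. Using the stacked dynamics $\bx = \bH\bu + \bG\bw$ and $\by = \bC\bx + \bv$, together with the purified system $\hat{\bx} = \bH\bu$ (since $\hat{x}_0 = 0$ and no noise) and $\hat{\by} = \bC\hat{\bx} = \bC\bH\bu$, we obtain $\boldeta = \by - \hat{\by} = \by - \bC\bH\bu$. Equivalently, $\by = \bC\bH\bu + \boldeta$. This relation is linear in $\bu$, which is what makes the change of variable affine.

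For the forward direction, substituting $\boldeta = \by - \bC\bH\bu$ into $\bu = \bU\boldeta + \bq$ gives
\begin{equation*}
\bu = \bU\by - \bU\bC\bH\bu + \bq \quad\Longleftrightarrow\quad (I + \bU\bC\bH)\bu = \bU\by + \bq.
\end{equation*}
Once I verify invertibility of $I + \bU\bC\bH$ (see below), I can left-multiply by its inverse to obtain $\bU' = (I+\bU\bC\bH)^{-1}\bU$ and $\bq' = (I+\bU\bC\bH)^{-1}\bq$, as claimed. The converse direction is symmetric: substituting $\by = \bC\bH\bu + \boldeta$ into $\bu = \bU'\by + \bq'$ yields $(I - \bU'\bC\bH)\bu = \bU'\boldeta + \bq'$, and inverting gives the stated expressions for $\bU$ and $\bq$.

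The one point requiring care is justifying invertibility of $I \pm \bU\bC\bH$ (and of $I - \bU'\bC\bH$). This is where the causal structure matters. The block matrix $\bH$ defined in Appendix A is strictly block lower triangular because $\hat{x}_t$ depends only on $u_0,\dots,u_{t-1}$; since $\bC$ is block diagonal, the product $\bC\bH$ inherits the strict block lower triangular structure. Because causality of the controller forces $\bU$ (resp.\ $\bU'$) to be block lower triangular, the product $\bU\bC\bH$ (resp.\ $\bU'\bC\bH$) is strictly block lower triangular, i.e.\ nilpotent. Consequently $I \pm \bU\bC\bH$ is a block lower triangular matrix with identity on the diagonal, hence unit upper/lower triangular after reordering and manifestly invertible, with the inverse again block lower triangular. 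This last property ensures that the resulting policy $\bU'\by + \bq'$ (respectively $\bU\boldeta + \bq$) is itself causal, so the correspondence between $\mathcal{U}_\boldeta$ and $\mathcal{U}_\by$ preserves admissibility.

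The main obstacle is essentially bookkeeping: keeping the block-triangular structure explicit and confirming that the Neumann-type expansion $(I + \bU\bC\bH)^{-1} = \sum_{k=0}^{T-1} (-\bU\bC\bH)^k$ terminates because $\bU\bC\bH$ is nilpotent of index at most $T$. Once this is in place, both directions follow from a one-line rearrangement, and the two formulas displayed in the lemma are immediate.
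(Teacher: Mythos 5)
Your proof is correct and is essentially the standard argument: the paper itself does not prove this lemma but defers to the cited references, which establish it by exactly the substitution $\boldeta=\by-\bC\bH\bu$ followed by the observation that $\bU\bC\bH$ (resp.\ $\bU'\bC\bH$) is strictly block lower triangular, hence nilpotent, so $I\pm\bU\bC\bH$ is invertible with block lower triangular inverse. The only point worth flagging is that the lemma as stated allows arbitrary $\bU\in\mathbb{R}^{mT\times pT}$, whereas your invertibility argument (correctly) relies on the causality restriction that $\bU$ and $\bU'$ be block lower triangular; this is implicit in the paper's definition of $\mathcal{U}^{\text{lin}}_{\boldeta}$, and you are right to make it explicit.
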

    
    
    

    

    \begin{lemma}\label{G:nice}
        Let $\hat{\Sigma} \succ 0, \rho \geq 0$. The set $\mathcal{G} := \{\Sigma \succ 0\; : \; \frac{1}{2} \left( \emph{Tr}(\hat{\Sigma}^{-1} \Sigma) - n + \log \frac{\det \hat{\Sigma}}{\det \Sigma} \right) \leq \rho\}$ is compact and convex.
        \end{lemma}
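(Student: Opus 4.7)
Define $f : \{\Sigma \succ 0\} \to \mathbb{R}$ by
$$
f(\Sigma) = \tfrac{1}{2}\bigl(\mathrm{Tr}(\hat{\Sigma}^{-1}\Sigma) - n - \log\det\Sigma + \log\det\hat{\Sigma}\bigr),
$$
so that $\mathcal{G} = \{\Sigma \succ 0 : f(\Sigma) \leq \rho\}$. The plan is to prove convexity from the structure of $f$, and compactness via an eigenvalue reduction that turns the matrix inequality into a separable condition on the scalars $g(t) = t - 1 - \log t$.

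For convexity, note that $\Sigma \mapsto \mathrm{Tr}(\hat{\Sigma}^{-1}\Sigma)$ is linear, $\Sigma \mapsto -\log\det\Sigma$ is a standard convex function on the positive definite cone, and $\log\det\hat{\Sigma}$ and $-n$ are constants. Hence $f$ is convex on $\{\Sigma \succ 0\}$, and $\mathcal{G}$ is the intersection of a sublevel set of $f$ with the convex cone $\{\Sigma \succ 0\}$, which is convex.

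For compactness I would introduce the congruence $M = \hat{\Sigma}^{-1/2}\Sigma\hat{\Sigma}^{-1/2}$, which is a bijection between $\{\Sigma \succ 0\}$ and $\{M \succ 0\}$ with continuous inverse. Letting $\lambda_1(M),\dots,\lambda_n(M) > 0$ denote its eigenvalues, one has $\mathrm{Tr}(\hat{\Sigma}^{-1}\Sigma) = \sum_i \lambda_i(M)$ and $\log\det\Sigma - \log\det\hat{\Sigma} = \sum_i \log\lambda_i(M)$, so
$$
2f(\Sigma) \;=\; \sum_{i=1}^n g(\lambda_i(M)), \qquad g(t) := t - 1 - \log t.
$$
A one-variable calculus check shows $g(t) \geq 0$ for all $t > 0$ with equality only at $t=1$, and $g(t) \to +\infty$ as $t \to 0^+$ or $t \to +\infty$. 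In particular, the sublevel set $\{t>0 : g(t) \leq 2\rho\}$ is a compact interval $[a,b] \subset (0,\infty)$ with $a>0$.

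From this I conclude compactness in two steps. Boundedness: since every $g(\lambda_i(M))$ is nonnegative, $f(\Sigma) \leq \rho$ forces $\lambda_i(M) \leq b$ for each $i$, hence $M \preceq b\, I$ and therefore $\Sigma \preceq b\, \hat{\Sigma}$, so $\mathcal{G}$ is bounded in any matrix norm. Closedness: take a sequence $\Sigma_k \in \mathcal{G}$ with $\Sigma_k \to \Sigma_\infty$ in the space of symmetric matrices; by the same argument $\lambda_i(M_k) \in [a,b]$ for all $k$ and $i$, and by continuity of eigenvalues the limit $M_\infty = \hat{\Sigma}^{-1/2}\Sigma_\infty\hat{\Sigma}^{-1/2}$ still satisfies $\lambda_i(M_\infty) \in [a,b]$, in particular $M_\infty \succ 0$ so $\Sigma_\infty \succ 0$. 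Continuity of $f$ on the open positive definite cone then gives $f(\Sigma_\infty) \leq \rho$, hence $\Sigma_\infty \in \mathcal{G}$.

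The only subtlety, which I would flag as the main obstacle, is ensuring the limit in the closedness argument does not escape to the boundary $\{\det\Sigma = 0\}$ where $f$ blows up; the eigenvalue lower bound $\lambda_i(M) \geq a > 0$ coming from $g(t)\to\infty$ as $t\to 0^+$ is precisely what rules this out.
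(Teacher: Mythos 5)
Your proof is correct, and for the compactness part it takes a genuinely different---and in fact tighter---route than the paper. The convexity argument is identical (linearity of the trace plus convexity of $-\log\det$, then sublevel sets). For compactness, the paper diagonalizes $\Sigma$ alone and writes $f(\Sigma)=\sum_i \mu_i\lambda_i-\sum_i\log\lambda_i$ with $\mu_i$ the eigenvalues of $\hat{\Sigma}^{-1}$, which is only an identity when $\hat{\Sigma}$ and $\Sigma$ commute, and it then discards the log term by asserting $\log\lambda_i\ge 0$, which fails whenever an eigenvalue lies in $(0,1)$; it also settles closedness by noting that sublevel sets of a continuous function are closed, which only gives closedness \emph{relative to} the open cone $\{\Sigma\succ 0\}$ and leaves open exactly the boundary-escape issue you flag. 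Your congruence $M=\hat{\Sigma}^{-1/2}\Sigma\hat{\Sigma}^{-1/2}$ makes the eigenvalue reduction exact, $2f(\Sigma)=\sum_i g(\lambda_i(M))$ with $g(t)=t-1-\log t\ge 0$ and coercive at both ends of $(0,\infty)$, which simultaneously yields the upper bound $\lambda_i(M)\le b$ (boundedness) and the lower bound $\lambda_i(M)\ge a>0$ (so limits cannot reach the singular boundary, giving closedness in the ambient space). What your approach buys is a complete and rigorous argument that repairs both weak points of the paper's version at essentially no extra cost; what the paper's sketch buys is brevity, at the price of two steps that do not survive scrutiny as written.
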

        \begin{proof}
        \textit{Convexity:} Let $f(\Sigma) =  \text{Tr}(\hat{\Sigma}^{-1} \Sigma) - \log\det{\Sigma}$; hence, $f(\Sigma) \leq \tilde{\rho}$ with $\tilde{\rho} = 2 \left(n - \log\det{\hat{\Sigma} + \rho}\right)$. Note that $f(W\Sigma$ is convex as it is the sum of two convex functions: (i) the trace is linear in $\Sigma$ and (ii) $-\log\det\Sigma$ \cite[pag.~74]{boyd2004convex}. Finally, $\mathcal{G}$ is convex since the sublevel sets of a convex function are convex \cite[pag.~75]{boyd2004convex}.
        
        \textit{Compactness:} We prove (i) Closedness: the function $f(\Sigma)$ is continuous in $\Sigma$ over the set of positive definite matrices. The sublevel sets of a continuous function are closed \cite{rockafellar1997convex}; (ii) Boundedness: Let $\Sigma = U\Lambda U^\top$ be the eigenvalue decomposition of $\Sigma$, where $\Lambda = \text{diag}(\lambda_1, \ldots, \lambda_d)$. Then, denoting as $\mu_i$ the $i$-th eigenvalue of $\hat{\Sigma}^{-1}$, we have
        $$
         f(\Sigma) = \sum_{i=1}^d\mu_i \lambda_i - \sum_{i=1}^d \log \lambda_i
        \implies \sum_{i=1}^d\mu_i \lambda_i - \sum_{i=1}^d \log \lambda_i \leq \tilde{\rho}.
        $$
        Since $\log \lambda_i \geq 0$ we get 
        $$
        \sum_{i=1}^d\mu_i \lambda_i \leq \tilde{\rho} \implies \sum_{i=1}^d \lambda_i \leq \frac{\tilde{\rho}}{\min_i \mu_i}.
        $$
        That is, the sum of the eigenvalues of $\Sigma$ is bounded. This implies that the eigenvalues of $\Sigma$ are bounded, and hence $\Sigma$ is bounded. Finally, $\mathcal{G}$ is compact since it is closed and bounded.
        \end{proof}
                          
        \begin{lemma}\label{lemma:opt:variance}
            Let $\hat{\Sigma }\succ 0$. Consider the following optimization problem parametric in $M \in \mathbb{S}^n_{+}$ and $\tau \in \mathbb{R}_+$
           $$
           J = \sup_{\Sigma \succ 0} \frac{1}{2}\emph{Tr}(M{\Sigma}) - \frac{\tau}{2}\left(\emph{Tr}(\hat{\Sigma}^{-1}\Sigma) - n + \log \frac{|\hat{\Sigma}|}{|\Sigma|} \right),
           $$ 
           and assume $(\hat{\Sigma}^{-1} - \frac{M}{\tau}) \succ 0$. Then, $
                   \Sigma^\star = {\left(\hat{\Sigma}^{-1} - \frac{M} {\tau}\right)^{-1} },$ and 
                   $
                   J^o = - \frac{\tau}{2}\ln \left| I - \frac{VM}{\tau} \right|. 
                   $
           \end{lemma}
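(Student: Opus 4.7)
The plan is to recognize the problem as an unconstrained concave maximization over $\mathbb{S}^n_{++}$ (after absorbing the open cone constraint into the logarithm barrier), apply matrix calculus to obtain the first-order optimality condition in closed form, and then substitute back to evaluate $J^o$. The key observation is that, up to an additive constant, the objective can be rewritten as
\[
-\tfrac{\tau}{2}\,\mathrm{Tr}\!\left(\bigl(\hat\Sigma^{-1} - \tfrac{M}{\tau}\bigr)\Sigma\right) + \tfrac{\tau}{2}\log|\Sigma| + \text{const}(\hat\Sigma,n,\tau),
\]
where the constant collects $\tfrac{\tau n}{2} - \tfrac{\tau}{2}\log|\hat\Sigma|$. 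Under the standing assumption $A := \hat\Sigma^{-1} - M/\tau \succ 0$, the first summand is linear (hence concave) in $\Sigma$, and $\log|\Sigma|$ is concave on $\mathbb{S}^n_{++}$; therefore the objective is strictly concave on the open convex cone $\mathbb{S}^n_{++}$.

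Next, I would differentiate with respect to $\Sigma$ using the standard identities $\nabla_\Sigma \mathrm{Tr}(A\Sigma) = A$ and $\nabla_\Sigma \log|\Sigma| = \Sigma^{-1}$. The stationarity condition becomes
\[
-\tfrac{\tau}{2}\,A + \tfrac{\tau}{2}\,\Sigma^{-1} = 0 \quad\Longleftrightarrow\quad \Sigma^{-1} = A,
\]
yielding the unique critical point $\Sigma^\star = A^{-1} = (\hat\Sigma^{-1} - M/\tau)^{-1}$. The assumption $A \succ 0$ guarantees $\Sigma^\star \in \mathbb{S}^n_{++}$, so the critical point is feasible. By strict concavity, $\Sigma^\star$ is the unique global maximizer, and since the $\log|\Sigma|$ barrier forces the objective to $-\infty$ as $\Sigma$ approaches the boundary of the cone with any eigenvalue going to $0$, no supremum is lost at the boundary.

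Finally, I would substitute $\Sigma^\star$ back to compute $J^o$. We get $\mathrm{Tr}(A\Sigma^\star) = \mathrm{Tr}(I_n) = n$ and $\log|\Sigma^\star| = -\log|A|$. Therefore
\[
J^o = -\tfrac{\tau n}{2} - \tfrac{\tau}{2}\log|A| + \tfrac{\tau n}{2} - \tfrac{\tau}{2}\log|\hat\Sigma| = -\tfrac{\tau}{2}\log\bigl(|A|\,|\hat\Sigma|\bigr),
\]
and using $|A|\,|\hat\Sigma| = |A\hat\Sigma| = \bigl|I - M\hat\Sigma/\tau\bigr|$ gives the announced closed form $J^o = -\tfrac{\tau}{2}\ln\bigl|I - \hat\Sigma M/\tau\bigr|$ (the $V$ in the statement is a typo for $\hat\Sigma$).

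There is no genuinely hard step here; the calculation is a textbook matrix-calculus exercise. The only subtleties worth flagging in a careful write-up are (i) justifying that strict concavity together with the self-concordant barrier behaviour of $-\log|\Sigma|$ at $\partial\mathbb{S}^n_{++}$ ensures the supremum is attained at the interior critical point rather than escaping to the boundary, and (ii) using the assumption $\hat\Sigma^{-1} - M/\tau \succ 0$ both to guarantee feasibility of $\Sigma^\star$ and to keep $\log|I - \hat\Sigma M/\tau|$ well defined in the final expression.
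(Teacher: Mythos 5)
Your proof is correct and follows essentially the same route as the paper's: observe strict concavity of the objective on $\mathbb{S}^n_{++}$ under the assumption $\hat\Sigma^{-1}-M/\tau\succ 0$, annihilate the first derivative to get $\Sigma^\star=(\hat\Sigma^{-1}-M/\tau)^{-1}$, and substitute back to obtain $J^o$. Your write-up is in fact more complete than the paper's (which leaves the back-substitution as ``algebraic manipulation'' and does not discuss boundary behaviour), and you correctly identify that the $V$ in the stated value of $J^o$ is a typo for $\hat\Sigma$.
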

           
           \begin{proof}
           First, we notice that the objective function is strictly concave in $\Sigma$ under the assumption $(\hat{\Sigma}^{-1} - \frac{M}{\tau}) \succ 0$. By \cite{boyd2004convex}, the optimum is obtained by annihilating the first derivative
           $$
           \begin{aligned}
           & \frac{\partial J(\Sigma)}{\partial \Sigma} = \frac{1}{2}M - \frac{\tau}{2}(\hat{\Sigma}^{-1} - \Sigma^{-1}) = 0 \\
           \implies &  \Sigma^\star =\textstyle \left(\hat{\Sigma}^{-1} - \frac{M}{\tau} \right)^{-1},
           \end{aligned}
           $$
           where $\Sigma^\star \succ 0$ under the stated assumption. Further, since $\hat{\Sigma}, M$ are symmetric, $\Sigma^\star$ is symmetric as well. The second part of the result follows by substituting of $\tilde{V}^o$ into $J(\tilde{V})$ and rearranging the terms with algebraic manipulation.
           \end{proof}

    \section*{Appendix C: Proofs of Section \ref{sec:theory}}
    \subsection{Proof of Proposition \ref{KL:bound}}
    \begin{proof}
        Let $p(x)$ and $q(x)$ be the two density functions corresponding to $\mathbb{P}$ and $\mathbb{Q}$, respectively. In particular,
        $$
        q(x) = \frac{1}{\sqrt{(2\pi)^n \det \Sigma_q}} \exp \left( -\frac{1}{2} x^T \Sigma_q^{-1} x \right).
        $$
        Substituting the expression into the KL-divergence definition in \eqref{KL:density}, we get
        $$
        R(\mathbb{P} \| \mathbb{Q})  = \int p(x) \log p(x) \, dx - \int p(x) \log q(x) \, dx,
        $$
        where the frst term in the right-hand side is the differential entropy of $p(x)$, i.e.
        $
        h(p) = -\int p(x) \log p(x) \, dx.
        $
        Next, we compute the expectation of $\log q(x)$ under $p(x)$ as
    $$
        \mathbb{E}[\log q(x)] = -\frac{n}{2} \log (2\pi) - \frac{1}{2} \log \det \Sigma_q - \frac{1}{2} \mathbb{E}[x^T \Sigma_q^{-1} x].
     $$
        Since $\mathbb{E}[x x^T] = \Sigma_p$, and hence $
        \mathbb{E}[x^T \Sigma_q^{-1} x] = \text{tr}(\Sigma_q^{-1} \mathbb{E}[x x^T]) = \text{Tr}(\Sigma_q^{-1} \Sigma_p),$ we finally get
    $$
        \mathbb{E}[\log q(x)] = -\frac{n}{2} \log (2\pi) - \frac{1}{2} \log \det \Sigma_q - \frac{1}{2} \text{Tr}(\Sigma_q^{-1} \Sigma_p).
    $$
        Next, we provide an upper bound via a maximum entropy approach. Note that
        $$
            \begin{aligned}
        & h(p) \leq h(\mathcal{N}(0, \Sigma_p)) = \frac{n}{2} \log (2\pi e) + \frac{1}{2} \log \det \Sigma_p \iff \\
        & 		-h(p) \geq -h(\mathcal{N}(0, \Sigma_p)) = -\left(\frac{n}{2} \log (2\pi e) + \frac{1}{2} \log \det \Sigma_p\right)
            \end{aligned}
        $$
        where the first inequality is due to the fact that the Gaussian distribution minimizes the entropy for a fixed covariance.
        Using this upper bound yields
        $$
        \begin{aligned}
        & R(\mathbb{P} || \mathbb{Q}) = -h(p) - \mathbb{E}[\log q(x)]\\
        \geq & -\left( \frac{n}{2} \log (2\pi e) + \frac{1}{2} \log \det \Sigma_p \right) - \\
        & \left( -\frac{n}{2} \log (2\pi) - \frac{1}{2} \log \det \Sigma_q - \frac{1}{2} \text{Tr}(\Sigma_q^{-1} \Sigma_p) \right).
        \end{aligned}
      $$
        Simplifying the above expression concludes the proof.
        \end{proof}


        \subsection{Proof of Theorem \ref{thr:lin:gauss}}
        \begin{proof}
            We immediately recognize that \eqref{eq:reformulation:1} and \eqref{eq:lower:reformulation} are identical. Hence, (U) = (P) = (L). As a result, (P) is solved by a linear policy $\bu^\star = \tilde{\bU}^\star \boldeta + \tilde{\bq}^\star$ expressed in terms of the purified observations $\boldeta$ and a Gaussian distribution $\mathbb{P}^\star \in \mathcal{G}$. Finally, we invoke Lemma \ref{lemma:change} to conclude that $\bu^\star = \bU^\star \by + \bq^\star$ for $\bU^\star = (I + \tilde{\bU}^\star \bC \bH)^{-1} \tilde{\bU}^\star$ and $\bq^\star = (I + \tilde{\bU}^\star \bC \bH)^{-1} \tilde{\bq}^\star$.
            \end{proof}

    \section*{Appendix D: Proofs of Section \ref{Sec:computation}}
    \subsection{Proof of Proposition \ref{strong:duality}}
    \begin{proof}
        By linearity of the trace, the objective function in \eqref{eq:linear:policy} is concave (in fact, linear) in $\bW,\bV$; further, since $\bQ,\bR\succeq 0$ it is convex in $\bU$. Moreover, by Lemma \ref{G:nice},  $\mathcal{G}$ is a convex and compact set. Hence, we invoke Sion minimax theorem \cite{sion1958general} to conclude that strong duality holds.
        \end{proof}

    \subsection{Proof of Lemma \ref{set:S:compact}}
    \begin{proof}
        We show that $\mathcal{J}(\bU,\bSigma)$ is coercive in $\bU$. Consider the first component of $\mathcal{J}(\bU,\bSigma)$: We have
        $$
        \begin{aligned}
        & \text{Tr}(\bF_1(\bU) \bW) \\
        &= \text{Tr}(\bD^\top \bU^\top \bR \bU \bD \bW) + \text{Tr}((\bH \bU \bD + \bG)^\top \bQ(\bH \bU \bD + \bG) \bW)\\
        & \geq \lambda_{\min}(\bR) \lambda_{\min}(\bW) \|\bU \bD\|_F^2 +  \lambda_{\min}(\bQ) \lambda_{\min}(\bW) \|\bH \bU \bD + \bG\|_F^2\\
         & \geq \lambda_{\min}(\bR) \underline{\lambda}_W \|\bU \bD\|_F^2 + \lambda_{\min}(\bQ) \underline{\lambda}_W \|\bH \bU \bD + \bG\|_F^2
        \end{aligned}
        $$
        for any $\bU \in \mathbb{R}^{mT \times pT}$, where $\| \cdot \|_F$ is the Frobenius norm, and $\underline{\lambda}_W := \min_{\bW \in \mathcal{G}} \lambda_{\min} (\bW) > 0$ which exists and is finite as $\mathcal{G}$ is compact by Lemma~\ref{G:nice}. Similarly,
             $$
        \begin{aligned}
        & \text{Tr}(\bF_2(\bU) \bV) \\
        &= \text{Tr}(\bU^\top \bR \bU \bV) + \text{Tr}((\bU^\top \bH^\top \bQ \bH \bU)\bV)\\
        & \geq \lambda_{\min}(\bR) \lambda_{\min}(\bV) \|\bU\|_F^2 +  \lambda_{\min}(\bQ) \lambda_{\min}(\bV) \|\bH \bU\|_F^2\\
         & \geq \lambda_{\min}(\bR) \underline{\lambda}_V \|\bU\|_F^2 + \lambda_{\min}(\bQ) \underline{\lambda}_V \|\bH \bU\|_F^2,
        \end{aligned}
        $$
        where $\underline{\lambda}_V := \min_{\bV \in \mathcal{G}} \lambda_{\min} (\bV) > 0$ which again exists and is finite due to compactness of $\mathcal{G}$. Combining the above bounds, we conclude that, for all $\bSigma \in \mathcal{G}$, $\mathcal{J}(\bU,\bSigma)$ is lower-bounded by a quadratic form of $\|\bU\|_F$ with positive leading coefficient $c(\underline{\lambda}_W, \underline{\lambda}_V, \bQ, \bR, \bD, \bH, \bG) \in \mathbb{R}_{++}$ independent of $\bSigma$. Thus, $\mathcal{J}(\bU,\bSigma)$ is uniformly coercive in $\bU$ , i.e., for all $\alpha > 0$ there exists $K > 0$ such that $\|\bU\|_F \geq K \implies \mathcal{J}(\bU, \bSigma) \geq \alpha$ for all $\bSigma \in \mathcal{G}$. As a result, any minimizing sequence $\{\bU_k(\bSigma)\} \in \mathbb{R}^{mT \times pT}$ must be bounded for all $\bSigma \in \mathcal{G}$. By the Bolzano-Weierstrass theorem, it admits a convergent subsequence $\{\bU_{k_j}(\bSigma)\} \rightarrow \bU^\star(\bSigma)$ for all $\bSigma \in \mathcal{G}$. As $\mathcal{J}(\bU,\bSigma)$ is continuous, this ensures the existence of an optimal solution $\bU^\star(\bSigma)$.

        Let $\phi(\bSigma) := \min_{\bU \in \mathbb{R}^{mT \times pT}} \mathcal{J}(\bU, \bSigma)$ and notice that it is continuous since $\mathcal{J}$ is continuous and $\mathcal{G}$ compact. Define $M = \max_{\bSigma \in \mathcal{G}}\phi(\bSigma)$. Pick $\alpha = M + \epsilon$ with $\epsilon > 0$ arbitrarily small. By uniform coercivity, $\phi(\bSigma) \leq M < M + \epsilon = \alpha$. Thus, any minimizer $\bU^\star(\bSigma)$ must satisfy $\mathcal{J}(\bU^\star(\bSigma), \bSigma) < \alpha \implies \|\bU^\star(\bSigma)\|_F < K$. Hence, $\|\bU^\star(\bSigma)\|_F \leq K$ for all $\bSigma \in \mathcal{G}$. Thus, we can take $\mathcal{S} = \text{co}(\{ x \in  \mathbb{R}^{mT \times pT} : \|x\|_F \leq K \})$, where $\text{co}$ denotes the convex hull. By definition, $\mathcal{S}$ is compact, convex and contains all minimizers $\bU^\star(\bSigma)$ for all $\bSigma \in \mathcal{G}$.



        \end{proof}

        \subsection{Proof of Theorem \ref{thr:convergence:CT}}
        \begin{proof}
            The proof follows by invoking \cite[Theorem~1]{hofbauer2006best} after veryfing that all standing assumptions hold in our setting. Specifically, $\mathcal{J}(\bU,\bSigma)$ is a continuous saddle point function, and $\mathcal{S}, \mathcal{G}$ are compact and convex subsets of finite-dimensional Euclidean spaces.
            \end{proof}

    \section*{Appendix E: Proofs of Section \ref{sec:endo}}

    \subsection{Proof of Proposition \ref{propo:recursion}}
    We divide the proof into two parts. \\
    \textit{{First part: Optimal solutions.}}
    By exploiting \eqref{eq:tightness}, the definition of $H_{t+1}(I_{t+1})$ in \eqref{eq:opt:value:function} and the property of expectation of quadratic forms, we notice that the objective function in \eqref{eq:opt:value:function} is equivalent to
    \begin{equation} \label{eq:proof_H}
    \begin{aligned}
    & \inf_{u_t} \sup_{{\mu}_t, {W}_t\succ 0} {\mathbb{E}_{\mathbb{P}_{w_t}}}\left[ \frac{1}{2}(x_{t}^\top Q x_t + u_t^\top R u_t) + \right.\\
    & \left. \frac{1}{2} \left( Ax_t + Bu_t + {\mu}_t \right)^\top P_{t+1} \left( Ax_t + Bu_t + {\mu}_t \right)+   \right.\\
    & \left.  \frac{1}{2} \text{Tr}(P_{t+1}{W}_t) \frac{1}{2} \text{Tr}(S_{t+1} {\luci \Sigma_{t+1}} ) - \frac{\tau_t}{2} \left( \text{Tr}(\hat{W}_t^{-1}{W}_t) - n + \right. \right.\\
    & \left. \left.  {\mu}_t^\top \hat{W}_t^{-1}{\mu}_t +\ln \frac{|{\hat{W}_t}|}{|{W}_t|}  \right) \left| \right. I_{t}, u_t \right] + z_{t+1},
    \end{aligned}
    \end{equation}
    {\luci By standard arguments from filtering theory, it follows that $\Sigma_{t+1}$  can be expressed as a function of $\Sigma_{t}$ as}
    $$
    \Sigma_{t+1} ={W}_t + A \Sigma_t A^\top  - A \Sigma_t C^\top \left( C  \Sigma_t C^\top + \hat{V}_t \right)^{-1} C \Sigma_t A^\top.
    $$
    {\luci By substituting $ \Sigma_{t+1}$ in \eqref{eq:proof_H},  $H_t(I_{t})$ equals
    we obtain}
    \begin{equation}
    \begin{aligned} \textstyle
    & \inf_{u_t} \sup_{{\mu}_t, {W}_t\succ 0} {\mathbb{E}_{\mathbb{P}_{w_t}}}\left[ \frac{1}{2}(x_{t}^\top Q x_t + u_t^\top R u_t) + \right.
    \\
    & \left. \frac{1}{2} \left( Ax_t + Bu_t + {\mu}_t \right)^\top P_{t+1} \left( Ax_t + Bu_t + {\mu}_t 
    \right) +\right.
    \\
    & \left.
     +  \frac{1}{2} \text{Tr}(P_{t+1}{W}_t) +  \frac{1}{2} \text{Tr}(S_{t+1}{W}_t)  - \frac{\tau_t}{2} \left( \textstyle \text{Tr}(\hat{W}_t^{-1}{W}_t) - n + 
    \right. \right.
    \\
    & \left. \left.  {\mu}_t^\top \hat{W}_t^{-1}{\mu}_t + \ln \frac{|{\hat{W}_t}|}{|{W}_t|}  \right) \left| \right. I_{t}, u_t \right] + z_{t+1} +  \\
    & 
    \underbrace{\frac{1}{2}\text{Tr}\left( S_{t+1} (A  \Sigma_t A^\top \!-\!  A \Sigma_t C^\top \left( C \Sigma_t C^\top \!+\! \hat{V}_t\right)^{-1} \!C  \Sigma_t A^\top )\! \right)}_{r_t(\Sigma_t)}\!.
    \end{aligned}
    \end{equation}
    
    Notice that there are no terms coupling ${\mu}_t$ and ${W}_t$, hence we can carry out the two maximization problems separately. As for ${W}_t$, we obtain:
    \begin{equation}\label{V:start:opt}
    \begin{aligned}
    {W}_t^o & = \argmax_{{W}_t \succ 0} \frac{1}{2} \text{Tr}((P_{t+1} + S_{t+1}){W}_t) - \\
    & \frac{\tau_t}{2} \textstyle \left( \text{Tr}(\hat{W}_t^{-1}{W}_t) - n + \ln \frac{|{\hat{W}_t}|}{|{W}_t|}  \right).
    \end{aligned}
    \end{equation}
    Since \eqref{eq:tau:conditionPS} holds,
    we can invoke Lemma \ref{lemma:opt:variance} to conclude that
    $$
    {W}_t^o = \textstyle \left( \hat{W}_t^{-1} - \frac{(P_{t+1} + S_{t+1})}{\tau_t}  \right)^{-1}. 
    $$
    
    As for ${\mu}_t$, we obtain:
    $$
    \begin{aligned}
   {\mu}^o_t & = \argmax_{
    {\mu}_t} {\mathbb{E}_{\mathbb{P}_{w_t}}} \left[  \frac{1}{2} {\mu}_t^\top P_{t+1}{\mu}_t +  u_t^\top B^\top P_{t+1} {\mu}_t + \right.\\
    & \left.  x_t^\top A^\top P_{t+1} {\mu}_t - \frac{\tau_t}{2} {\mu}_t^\top V^{-1} {\mu}_t | I_{t}, u_t\right].
    \end{aligned}
    $$
    Under \eqref{eq:tau:conditionPS},
    the objective function in the last equation is concave in ${\mu}_t. $ 
    By \cite{boyd:vandenberghe:2004}, the maximum is obtained by
    annihilating the first derivative, i.e. by imposing
    $
        P_{t+1} {\mu}_t + P_{t+1} B u_t + P_{t+1} A \hat{x}_t - \tau_t \hat{W}_t^{-1} {\mu}_t = 0 , 
    $
    which leads to
    \begin{equation}\label{eq:mu:start}
    \begin{aligned}
    {\mu}^o_t & = \left( \tau_t \hat{W}_t^{-1} - P_{t+1}\right)^{-1} P_{t+1} \left(A\tilde{x}_t + Bu_t \right)\\
    & = \textstyle\left[ \left( I - \frac{\hat{W}_t P_{t+1}}{\tau_t}\right)^{-1} - I \right]\left(A\tilde{x}_t + Bu_t \right).
    \end{aligned}
    \end{equation}
    
    For the outer minimization problem with respect to $u_t$, we first notice that ${\mu}^o_t$ is a function of $u_t$. 
    Then, grouping the terms that depend on $u_t,$ we have that 
    \begin{equation}\label{eq:u:minimization}
    \begin{aligned}
    {u}^o_t & = \argmin_{u_t}
    {\mathbb{E}_{\mathbb{P}_{w_t}}} \left[
    \frac{1}{2} u_t^\top R u_t + 
    \frac{1}{2} u_t^\top B^\top P_{t+1} B u_t  + \right. \\
    & \left.   
    \frac{1}{2} 
    ({\mu}^{o}_t)^\top P_{t+1}{\mu}^o_t + 
    x_t^\top A^\top P_{t+1} B u_t + 
    u_t^\top B^\top P_{t+1} {\mu}^{o}_t 
    + \right. \\
    & \left. 
    x_t^\top A^\top P_{t+1} {\mu}^{o}_t -
    \frac{\tau_t}{2} ({\mu}^{o}_t)^\top \hat{W}_t^{-1} {\mu}^o_t 
    | I_{t} \right].
    \end{aligned}
    \end{equation}
    From \eqref{eq:mu:start} we have that 
    $ {\mu}^o_t = \frac{1}{\tau_t} \hat{W}_t P_{t+1} \left( A \tilde{x}_t + B u_t + {\mu}^o_t \right),$
    from which it follows that the derivative of the objective function in \eqref{eq:u:minimization} with respect to $u_t$ is
    $$
    \begin{aligned}
    & {\mathbb{E}_{\mathbb{P}_{w_t}}}\left[ R u_t + B^\top P_{t+1} \left( A x_t + B u_t + {\mu}^o_t \right) +  \right.\\
    & \left. \frac{\partial {\mu}_t^o}{\partial u_t}^\top P_{t+1} \left( A \tilde{x}_t + B u_t + {\mu}^o_t \right) -  \tau_t \frac{\partial {\mu}_t^o}{\partial u_t}^\top \hat{W}_t^{-1} {\mu}_t^o \right] \\
    = &  R u_t + B^\top P_{t+1} \left( A \tilde{x}_t + B u_t + {\mu}^o_t \right) \\
    = & R u_t + B^\top P_{t+1} \left( I - \frac{\hat{W}_t P_{t+1}}{\tau_t}\right)^{-1} (A \tilde{x}_t +  B u_t).
    \end{aligned}
    $$
    Consequently, the second derivative with respect to $u_t$ 
    of the objective function in \eqref{eq:u:minimization} is 
    ${\luci R + B^\top  \big( P_{t+1}^{-1} - \frac{V}{\tau_t} \big)^{-1}  B },$ which is positive semidefinite under assumption  \eqref{eq:tau:conditionPS}.
    We conclude that the objective function in \eqref{eq:u:minimization} is convex with respect to $u_t$, hence the optimal control input $u_t^o$ can be derived by annihilating the first derivative. Applying Woodbury matrix identity, we get
    \begin{equation} \label{eq:utstar_proof}
    \begin{aligned}
    u_t^o & = -R^{-1} B^\top \textstyle \left( P_{t+1}^{-1} + BR^{-1}B^\top - \frac{\hat{W}_t}{\tau_t} \right)^{-1}A \tilde{x}_t.
    \end{aligned}
    \end{equation}

    \textit{{Second part: Optimal value function.}}
    Next, we derive a closed-form expression for the DP recursions. By recalling the definition of $r_t(\Sigma_t)$ and rearranging some terms, we express the optimal objective function in \eqref{eq:opt:value:function} as
    \begin{equation}\label{eq:H:init}
    \begin{aligned}
    & H_t(I_{t})  \coloneqq 
     \underbrace{ {\mathbb{E}_{\mathbb{P}_{w_t}}}
    \left[\frac{1}{2}(x_{t}^\top Q x_t + u_t^{o\top} R u^o_t)\right. }_{(\spadesuit)} +\\
    & \underbrace{ \left. \frac{1}{2} \!\left( \!Ax_t \!+\! Bu^o_t \!+\! {\mu}^o_t \!\right)^\top\!\! P_{t+1} \!\left(\! Ax_t \!+\! Bu^o_t \!+ \!{\mu}^o_t\! \right) \!- \!\frac{\tau_t}{2}{\mu}_t^{o\top} \! \hat{W}_t^{-1}{\mu}^o_t\right. \! | I_t] }_{(\spadesuit)}\\ 
    & \left. 
    + \underbrace{
     \frac{1}{2} \text{Tr}(( P_{t+1} \!+ \!S_{t+1} ) {W}^o_t) \!-\! \frac{\tau_t}{2} \big( \text{Tr}(\hat{W}_t^{-1}{W}^o_t) \!-\! n  \!+ \! \ln \frac{|{\hat{W}_t}|}{|{W}^o_t|} \big)}_{(\clubsuit)}  \right.\\
    & \left.
     + z_{t+1} + r_t(\Sigma_t) | I_{t} \right] .
    \end{aligned}
    \end{equation}
    From Lemma  \ref{lemma:opt:variance}, $
    (\small{\clubsuit}) = -\frac{\tau_t}{2}\log \left| I - \frac{\hat{W}_t(P_{t+1} + S_{t+1})}{\tau_t}\right|.
    $
    Moreover, 
    {\allowdisplaybreaks
    \begin{align*}
    (\spadesuit) &=  \frac{1}{2}\tilde{x}_t^\top Q \tilde{x}_t + \frac{1}{2}\text{Tr}(\Sigma_t Q) + \frac{1}{2}u_t^{o \top}R u_t^o + \\
    & \frac{1}{2}\left( A\tilde{x}_t + Bu^o_t + {\mu}^o_t \right)^\top P_{t+1} \left( A\tilde{x}_t + Bu^o_t + {\mu}^o_t \right) + \\
    &  + \frac{1}{2} \text{Tr}( A^\top P_{t+1} A\Sigma_t) -\frac{\tau_t}{2}{\mu}_t^{o \top}\hat{W}_t^{-1}{\mu}_t^o \\
    = & \frac{1}{2}\tilde{x}_t^\top Q \tilde{x}_t +  
    \frac{1}{2}\text{Tr}(\Sigma_t (Q +  A^\top P_{t+1}A ))+ \frac{1}{2}u_t^{o \top}R u_t^o + \\
    & \frac{1}{2}\!\left( A\tilde{x}_t\! + \!Bu^o_t\right)^\top\! \textstyle\left( I\! -\! \frac{\hat{W}_t P_{t+1}}{\tau_t} \right)^{-1} \!P_{t+1}\textstyle\left( I \!- \!\frac{\hat{W}_t P_{t+1}}{\tau_t} \right)^{-1}\! \cdot \\
    & ( A\tilde{x}_t \!+\! Bu^o_t)
     - \frac{\tau_t}{2} ( A\tilde{x}_t\! + \!Bu^o_t )^\top\! \Big( \big( I \!- \!\frac{\hat{W}_t P_{t+1}}{\tau_t} \big)^{-1} \!-\! I \Big) \cdot \\ 
     & \hat{W}_t^{-1} \Big( \textstyle \big( I \!- \!\frac{\hat{W}_t P_{t+1}}{\tau_t} \big)^{-1} \!- \!I \Big)( A\tilde{x}_t\! + \!Bu^o_t )\\
     = & \frac{1}{2}\tilde{x}_t^\top Q \tilde{x}_t + 
     \frac{1}{2}\text{Tr}(\Sigma_t (Q +  A^\top P_{t+1}A)) + \frac{1}{2}u_t^{o \top}R u_t^o +\\
     & \frac{1}{2}\left( A\tilde{x}_t\! + \!Bu^o_t\right)^\top M \left( A\tilde{x}_t\! + \!Bu^o_t\right),
    \end{align*}
    }
    {\luci where the second equality follows from \eqref{eq:mu:start}. Here}
    $$
    \begin{aligned}
    M & = \textstyle\left[ \textstyle\left( I\! -\! \frac{\hat{W}_t P_{t+1}}{\tau_t} \right)^{-1} \!P_{t+1}\left( I \!- \!\frac{\hat{W}_t P_{t+1}}{\tau_t} \right)^{-1} \right.\\
     & \left. - \tau_t \! \textstyle\left( \textstyle\left( \!I \!- \!\frac{\hat{W}_t P_{t+1}}{\tau_t} \right)^{-1} \!- \!I \! \right)\hat{W}_t^{-1} \textstyle\left( \!\textstyle\left( I \!- \!\frac{\hat{W}_t P_{t+1}}{\tau_t} \right)^{-1} \!- \!I \right) \right]\\
     & = \textstyle\left(I - \frac{P_{t+1}\hat{W}_t}{\tau}\right)^{-1}P_{t+1},
    \end{aligned}
    $$ 
    with the second equality following from algebraic manipulations. Hence, 
    {\allowdisplaybreaks
    \begin{align*}
    (\spadesuit) = &  \frac{1}{2}\tilde{x}_t^\top Q \tilde{x}_t + \frac{1}{2}\text{Tr}(\Sigma_t (Q +  A^\top P_{t+1}A )) 
    + \frac{1}{2}u_t^{o \top}R u_t^o +\\
    & \frac{1}{2}\left( A\tilde{x}_t\! + \!Bu^o_t\right)^\top M \left( A\tilde{x}_t\! + \!Bu^o_t\right)\\
    = & \frac{1}{2}\tilde{x}_t^\top \left(Q   + A^\top M A\right) \tilde{x}_t + \frac{1}{2}u_t^{\top o} \left(R   + B^\top M B\right) u_t^o + \\
    & u_t^{o \top} B^\top MA \tilde{x}_t + \frac{1}{2} \text{Tr}\left(\Sigma_t\left( Q + A^\top P_{t+1}A \right)  \right)\\
      \overset{(a)}{=} & \frac{1}{2}\tilde{x}_t^\top \left(Q   + A^\top M A\right) \tilde{x}_t + \frac{1}{2} \text{Tr}\left(\Sigma_t\left( Q + A^\top P_{t+1}A \right)  \right)\\
     & - \frac{1}{2}\tilde{x}_t^\top A^\top M B \left(  R + B^\top M B\right)^{-1} B^\top M A \tilde{x}_t \\
      = & \frac{1}{2}\tilde{x}_t^\top \!\left(Q  \! + \!A^\top M A \! +\! A^\top M B \!\left(  R \!+\! B^\top M \!B\right)^{-1} \!\!B^\top M \!A\!\right) \!\tilde{x}_t  \\
       & + \frac{1}{2} \text{Tr}\left(\Sigma_t\left( Q + A^\top P_{t+1}A \right)  \right)\\
    \overset{(b)}{=} & \frac{1}{2}\tilde{x}_t^\top \left(Q   + A^\top \textstyle \left( P_{t+1}^{-1} + B R^{-1}B^\top - \frac{\hat{W}_t}{\tau_t}\right)^{-1} A \right) \tilde{x}_t + \\
       & \frac{1}{2} \text{Tr}\left( \Sigma_t\left( Q + A^\top P_{t+1}A \right)  \right)\\
    \overset{(c)}{=} &  \frac{1}{2} \tilde x_t^\top P_t \tilde x_t +\frac{1}{2} \text{Tr}\left( \Sigma_t\left( Q + A^\top P_{t+1}A \right)  \right), 
    \end{align*}
    }
    {\luci where  $(a)$ follows from substituting $u_t^o = -(R+B^\top M B)^{-1}B^\top M A \tilde{x}_t$ derived in \eqref{eq:utstar_proof}, }
    $(b)$ from Woodbury matrix identity and $(c)$ from defining $P_t = Q   + A^\top \left( P_{t+1}^{-1} + B R^{-1}B^\top - \frac{\hat{W}_t}{\tau_t}\right)^{-1}A.$ Then, combining $(\clubsuit)$ and $(\spadesuit)$, and exploiting the expression for the expectation of quadratic forms, \eqref{eq:H:init} becomes
    {\allowdisplaybreaks
    \begin{align*}
     & \frac{1}{2} \tilde x_t^\top P_t \tilde x_t +\frac{1}{2} \text{Tr}\left( \Sigma_t\left( Q + A^\top P_{t+1}A \right)  \right) - \\
     & \frac{\tau_t}{2}\log \left| I - \frac{\hat{W}_t(P_{t+1} + S_{t+1})}{\tau_t}\right| + z_{t+1}  +r_t\\
       \overset{(d)}{=} & {\mathbb{E}}\left[\frac{1}{2} x_t^\top P_t x_t + \frac{1}{2}\xi_t^\top \left( Q + A^\top P_{t+1}A - P_t \right)\xi_t  |I_t \right]   \\
        & -\frac{\tau_t}{2}\log \left| I - \frac{\hat{W}_t(P_{t+1} + S_{t+1})}{\tau_t}\right| + z_{t+1} + r_t,
    \end{align*}
    }
    where in $(d)$ we recall that $\xi_t = x_t - \tilde{x}_t$ is the estimation error.
    The thesis follows by defining $S_t = Q + A^\top P_{t+1}A - P_t $ and $z_t = -\frac{\tau_t}{2}\log \left| I - \frac{\hat{W}_t(P_{t+1} + S_{t+1})}{\tau_t}\right| + z_{t+1} . \hfill \square$

    \subsection{Proof of Theorem \ref{CGD:convergence}}
    \begin{proof}
        We begin by showing that $\bar \Wc_0(I_0, \tau) $ is jointly convex in $\tau = (\tau_0, \dots, \tau_{T-1})$ for any information vector $I_0.$ First, we prove that the statement is true when the state-transition matrix $A$ is invertible. 
        Consider {\agu any sequence of positive semidefinite matrices  
        $ \{ \Lambda_t \in \mathbb{S}_{+}^n \}_{t=0}^{T-1} $.
        In particular, we are interested in the sequence defined by the following iteration, but this will become essential only after equation \eqref{eq:conv_proof_mean0}:} 
        \begin{equation}\label{eq:Lambda_rec}
             \Lambda_{t+1} = 
             A M_t \hat{\Sigma}_t^{-1} \Lambda_{t} \hat{\Sigma}_t^{-1} M_t A^\top 
             + A M_{t} C^{\top} \hat{V}_t^{-1} C  M_{t} A^\top 
             + {W}_t
        \end{equation}
        with $ \Lambda_0 = \hat{\Sigma}_0.$ 
        Define the linear system
        $$
        \omega_{t+1} = A \omega_{t} + B u_t + {w}_t
        $$
        with {\agu $\omega_0$ being a random vector with mean
        equal to $\bar{\omega}_0$ and covariance matrix
        equal to $\Omega_0$.
        We call ${f}_{-1}$ the probability density  
        of $\omega_0$ and assume that $\omega_0$ is independent of ${w}_t$ for all $t=0,1,\dots, T$.}
        Let $U_t := [u_0, \dots, u_t] $ be the collection of input vectors up to time $t,$
        and define  
        $$ \Omega_{t+1} :=  E \big[ 
        (\omega_{t+1} - E[\omega_{t+1}| U_t]) 
        (\omega_{t+1} -  E[\omega_{t+1}| U_t)^\top 
        | U_t \big].$$
        Clearly, it holds that 
        \begin{equation} \label{eq:omega_rec} 
        \Omega_{t+1} = A \Omega_{t} A^\top + {W}_t
        \end{equation}
        for $t=0, \dots, N.$ 
        {\agu As a consequence of  the invertibility of the matrix A, we can select $\Omega_0 \in \mathbb{S}_{+}$ in such a way} that 
        \begin{equation}\label{eq:lambda_omega_ineq}
             \Omega_t \geq \Lambda_t,  \quad t = 0, \dots, T. 
        \end{equation}
        In view of \eqref{eq:lambda_omega_ineq}, there exists a sequence of random vectors
        $\{ e_t \}_{t=0}^{T}$ 
        such that, 
        letting $ \hat{\omega}_{t+1} := {\mathbb{E}}\left[ \omega_{t+1} | U_{t}, e_{t+1} \right],$ we have
        \begin{align}
        & {\mathbb{E}}
        \left[ (\omega_{t+1} - \hat{\omega}_{t+1} ) 
        (\omega_{t+1} - \hat{\omega}_{t+1})^\top 
        | U_{t}, e_{t+1}
        \right] = \Lambda_{t+1},  \label{eq:conv_proof_cov_t}  \\
        & {\mathbb{E}}
        \left[ (\omega_{0} - \hat{\omega}_{0} ) 
        (\omega_{0} - \hat{\omega}_{0})^\top 
        | e_{0}
        \right] = \Sigma_{0}, \label{eq:conv_proof_cov0} \\
        & {\mathbb{E}} \left[ \omega_0 | e_0 \right] = \hat{x}_0. \label{eq:conv_proof_mean0}
        \end{align}
        Let $\tilde{I}_0 = e_0 $ and $\tilde{I}_t = (U_{t-1}, e_{t})  $ for $t=1,\dots, T-1,$ 
        and consider 
        $u_t = \tilde{\pi}_t(\tilde{I}_t) $
        and ${f}_t = \tilde{\gamma}_t (\tilde{I}_t) $ 
        where $ \tilde{\pi}_t $ and  $\tilde {f}_t $ are measurable functions which map the ``information vector'' $ \tilde I_t$ to a control input $u_t$ and a density measure $ {f}_t,$ respectively,  at each time step.
        Now, consider the new control problem
        \begin{multline*} 
        \mathcal{Q}_0 (\tilde{I}_0, \tau) := \textstyle
        \inf_{ \big\{u_t = \tilde{\pi}_t (\tilde{I}_t) \big\}_{t=0}^{N} } 
        \; 
        \sup_{
        \big\{{f}_t = \tilde{\gamma}_t (\tilde{I}_t)  \in \mathcal{P}_{\mathcal{G}} (\mathbb{R}^n) \big \}_{t=0}^{T-1} 
        }
        \\ 
        {\mathbb{E}} \Big[ \sum_{t=0}^{T+1} \Big(\frac{1}{2} (\omega_t^\top Q \omega_t + u_t^\top R u_t) - \tau_t \mathcal{R}_t \Big)  + \frac{1}{2} x_{T}^\top Q_{T} x_{T}\Big].
        \end{multline*} 
        With computations similar to Subsection \ref{sec:approx}, exploiting \eqref{eq:Lambda_rec}, \eqref{eq:conv_proof_cov_t}, \eqref{eq:conv_proof_cov0}, \eqref{eq:conv_proof_mean0}, we conclude that $
        \mathcal{Q}_0 (\tilde{I}_0, \tau) = \bar{\mathcal{V}}_0 (I_0, \tau) 
        $
        for any fixed $\tau. $
        Note that $\mathcal{Q}_0 (\tilde{I}_0, \tau)$
        is jointly convex in $\left(u_0, \dots, u_N, \tau \right)$ for any $(\tilde f_0, \dots, f_{T-1})$. By Lemma \ref{lemm:convexity_max}, the maximization with respect to $( f_0, \dots,f_{T-1})$ yields a jointly convex function in $\left(u_0, \dots, u_{T-1}, \tau \right)$. Moreover, in view of Lemma \ref{lemm:convexity_min}, the partial minimization with respect to the input variables $\left(u_0, \dots, u_{T-1} \right)$ preserves convexity with respect to $\tau$.
        We therefore conclude that $\mathcal{Q}_0 (\tilde{I}_0, \tau)$, hence $\bar{\mathcal{V}}_0 (I_0, \tau),$ is convex in $\tau$. 
        Consequently, $ \bar{\mathcal{W}}_0 (I_0, \tau)$ in \eqref{pb:dual_approx} is convex in $\tau$ as sum of two convex functions. 
        
        Finally, we extend this result by continuity to the case where the state-transition matrix $A$ is singular. Let \( A \) be a singular matrix. {\agu We can select a positive and arbitrarily small \( \epsilon \in \mathbb{R} \) such that} \( \tilde{A} = A + \epsilon I \) is invertible. Since the function \( \bar{\mathcal{W}}_0 (I_0, \tau) \) is convex in \( \tau \) for \( \tilde{A} \) and continuous with respect to \( \epsilon \), we conclude that taking the limit as \( \epsilon \to 0 \) implies \( \bar{\mathcal{W}}_0 (I_0, \tau) \) remains convex in \( \tau \) for \( A \) as well.
     
        At this point, invoking \cite[Chapter~9]{nocedal1999numerical}, which holds since $\Wc_0 (I_0, \tau)$ is jointly convex in $\tau$ for any $I_0$, conclude the proof.
        \end{proof}
    

\end{document}